\newtheorem{theorem}{Theorem}[section]
\newtheorem{lemma}[theorem]{Lemma}
\newtheorem{proposition}[theorem]{Proposition}
\newtheorem{definition}[theorem]{Definition}
\title{Quartic Equations with Trivial Solutions over Gaussian Integers}
\author{Felix Sidokhine}
\begin{document}
\maketitle

\begin{abstract}
In our work we study the equations of the form $aX^4+bX^2 Y^2+cY^4=dZ^2$ over Gaussian integers by a method of the resolvents. We study as a new equations $X^4+6X^2 Y^2+Y^4=Z^2$ (Mordell's equation over $\mathbb{Z}[i]$), $X^4+6(1+i)X^2Y^2+2iY^4=Z^2$ and $X^4\pm Y^4=(1+ i)Z^2$ and give the new proofs of the known theorems on $X^4+Y^4=Z^2$ (Fermat - Hilbert), $X^4\pm Y^4=iZ^2$ (Szab{\'o}  - Najman).
\end{abstract}

\section{Introduction}
Quartic equations of the form $aX^4+bX^2 Y^2+cY^4=dZ^2$ have a rich past and were research topics to many famous names: P. Fermat, L. Euler, A. Legendre and J. Lagrange. However, their results concerned only equations over the rational integers $\mathbb{Z}$.
 
In a closer past, D. Hilbert, A.Aigner, T. Nagell, L.J. Mordell, B.N. Delone, D.K. Faddeev had undertaken the challenge of studying the algebraic equations over some abelian algebraic extensions of $\mathbb{Z}$ and even ventured into generalizing the problem to having algebraic integers as coefficients. 

The present work is structured as follows. First, there is a discussion of various preliminary results needed to build the future resolvent theory for the quartic equations. For quartic equations, the resolvent is a system of equations of second-degree which is algebraically obtained from the original diophantine equation \cite{Sidokhine:2013ab}. The concept of resolvents is introduced and various specific quartic equations of the form $aX^4+bX^2Y^2+cY^4=dZ^2$ are studied over $\mathbb{Z}[i]$.

The present work itself is a genuinely original approach to the problem of the quartic equations with only trivial solutions, and is only related to other publications on this topic in terms of results but not in terms of methodology.

\section{The Infinite Descent Hypothesis for the Rings with Unique Factorization} 

\begin{definition}
A ring $R$ is a unique factorization domain (UFD) if:

\begin{itemize}
\item $R$ is a domain,
\item any irreducible element of $R$ is also a prime element,
\item for any a belonging to $R$ takes place $a=u p_1 p_2...p_m$  where u belongs to unit group $U$ and $p_i$ belong to a set primes $\pi$ of $R$ not all $p_i$  are necessary distinct, and this representation is unique up to permutations and associates.
 \end{itemize}
 
 \end{definition}
 
\subsection{On Decomposing of the Ring with Unique Factorization into Disjoint Cosets}

An invariant characteristic an element of $R$ is the number of prime divisors contained in its representation including multiplicities. This characteristic for any $a \in R$ will be denoted $\nu(a)$. In practice, $\nu$ is interpreted as a map $\nu: R \to \mathbb{Z}_+= \{n\in Z | n \geq 0\}$. The mapping is built as follows:

\begin{definition}
$\nu_p: R \to \mathbb{Z}_+$  , where $p \in \pi$, acts on any element $a$ of $R$ as follows: $\nu_p(a)=\alpha, p^\alpha || a$.   For any $p$, $\nu_p(a)=0 $ if $p\nmid a$ (i.e. $a \nequiv 0 \mod p$).
\end{definition}

\begin{definition}
$\nu : R\to \mathbb{Z}_+$, $\nu$ acts on any element $a \in R$ as follows: $\nu(a)=\sum_{p\in\pi} \nu_p(a)$.
\end{definition}

$\sum_{p\in\pi} \nu_p (a)$ consists of only a finite number of terms due to $R$ being UFD.
Let us notice some important properties of the mapping $\nu$:

\begin{itemize}
\item $\nu(a) = 0$ $\leftrightarrow$ $a$  belongs to unit group
\item $\nu(a) = 1$ $\leftrightarrow$ $a$  belongs to $\pi$
\item $\nu(ab) = \nu(a) + \nu(b)$
\item if $a | b$ and $b \nmid a$ $\rightarrow$ $\nu(a) < \nu(b)$.
\end{itemize}
         
\begin{proposition} 
$R/\ker (\nu)=\{A_n\}$, $A_n=\{ a \in R | \nu(a)=n\}$ and if $n \neq m$ then $A_n\cap A_m=\emptyset$. 
\end{proposition}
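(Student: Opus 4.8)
The plan is to interpret the statement as asserting that the map $\nu$ induces a partition of $R$ into the fibers $A_n = \nu^{-1}(n)$, and that these fibers are pairwise disjoint. The notation $R/\ker(\nu)$ suggests we are to regard $\nu$ as a morphism and collect its fibers as the quotient by the equivalence relation ``$a \sim b \iff \nu(a) = \nu(b)$'', which is the natural reading since $\nu$ is not a ring homomorphism but a map to the monoid $(\mathbb{Z}_+, +)$. First I would establish that $\{A_n\}_{n \in \mathbb{Z}_+}$ is genuinely a family of subsets of $R$ indexed by the image of $\nu$: by the third bullet property $\nu(ab) = \nu(a) + \nu(b)$ together with the UFD hypothesis, $\nu$ takes every value in $\mathbb{Z}_+$ (take products of $n$ primes to hit $n$), so the index set is all of $\mathbb{Z}_+$.

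Next I would prove the two substantive claims. For disjointness, suppose $a \in A_n \cap A_m$; then by definition $\nu(a) = n$ and $\nu(a) = m$, so $n = m$, establishing the contrapositive of ``$n \neq m \Rightarrow A_n \cap A_m = \emptyset$''. This is immediate once one knows $\nu$ is well-defined as a function, i.e. single-valued, which in turn rests on the uniqueness clause of the UFD definition: the prime factorization of $a$ is unique up to units and reordering, so $\nu(a) = \sum_{p \in \pi} \nu_p(a)$ does not depend on any choices. For the covering property (that the $A_n$ exhaust $R$), I would note every $a \in R$ has some finite $\nu(a) = n$, hence $a \in A_n$; the finiteness is exactly the remark in the excerpt that $\sum_{p \in \pi} \nu_p(a)$ has only finitely many nonzero terms in a UFD.

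The only delicate point worth spelling out is the meaning of the quotient notation $R/\ker(\nu)$. Since $\nu$ maps into an additive monoid rather than a group, $\ker(\nu)$ in the strict group-theoretic sense is just $A_0 = U$, the unit group, and the quotient should be understood as the set of fibers under the equivalence relation induced by $\nu$ rather than a quotient by a normal subgroup. I would therefore state explicitly that $R/\ker(\nu)$ denotes $\{\nu^{-1}(n) : n \in \mathbb{Z}_+\}$ and verify this is a partition: the fibers are nonempty (by surjectivity of $\nu$ onto $\mathbb{Z}_+$), pairwise disjoint (by the single-valuedness argument above), and cover $R$ (by finiteness of $\nu(a)$).

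The main obstacle is essentially definitional rather than computational: the proof is a routine unwinding of the definitions of $\nu$ and of a partition, so the real work is in pinning down the intended meaning of the quotient-by-kernel notation and confirming that the UFD uniqueness axiom makes $\nu$ well-defined. I expect no technical difficulty beyond making those conventions precise, and I would present the argument as a short verification of the three partition axioms (nonempty, disjoint, covering) with disjointness being the literal content asserted in the proposition.
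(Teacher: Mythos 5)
Your proposal is correct and matches the paper's proof in substance: the paper disposes of this proposition in one line (``a direct consequence of unique factorization''), and your verification of well-definedness of $\nu$ via the uniqueness clause of the UFD axioms, plus the routine check of the partition properties, is exactly the unwinding that one-line proof leaves implicit. Your careful reading of the quotient notation $R/\ker(\nu)$ as the set of fibers of $\nu$ is the intended one, so there is no divergence in approach, only in the level of detail supplied.
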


\begin{proof}
Proposition 2.4 is a direct consequence of unique factorization.
\end{proof}

Furthermore, define a binary operation $\mu : R/\ker(\nu) \times R/\ker(\nu) \to R/\ker(\nu)$ as follows:  
$\mu(A_n, A_m)=A_{\nu(xy)}= A_{n+m}$  where $x\in A_n, y\in A_m$. The fact that $\mu$ is well-defined is again a consequence of unique factorization. Defines multiplication on the partition $R/\ker(\nu)$ making it into a monoid.

\begin{theorem}
$(R/\ker(\nu),\mu)$ is a monoid.
\end{theorem}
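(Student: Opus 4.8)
The plan is to recognize that $\nu$ is, by the third bulleted property, a homomorphism from the multiplicative monoid $(R,\cdot)$ to the additive monoid $(\mathbb{Z}_+,+)$, and that the classes $A_n$ are precisely its fibers; the monoid structure on $R/\ker(\nu)$ is then just the structure transported along this homomorphism. Concretely, I would verify the three monoid axioms — well-definedness together with closure of $\mu$, associativity, and the existence of a two-sided identity — each of which reduces to a corresponding fact about $(\mathbb{Z}_+,+)$.

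First I would re-examine well-definedness, which is the only place where unique factorization (through the property $\nu(ab)=\nu(a)+\nu(b)$) really does the work. Given $x,x'\in A_n$ and $y,y'\in A_m$, I must check that $xy$ and $x'y'$ land in the same class. Since $\nu(x)=\nu(x')=n$ and $\nu(y)=\nu(y')=m$, additivity gives $\nu(xy)=n+m=\nu(x'y')$, so both products lie in $A_{n+m}$ and $\mu(A_n,A_m)=A_{n+m}$ is independent of the chosen representatives. Closure follows from the same computation, provided $A_{n+m}$ is a genuine (nonempty) class: this holds because $xy\in A_{n+m}$, so the product of two nonempty fibers is again a nonempty fiber. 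Equivalently, the set of indices $n$ with $A_n\neq\emptyset$ is the image $\nu(R)$, which is a submonoid of $\mathbb{Z}_+$.

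Next, associativity: for any three classes, $\mu(\mu(A_n,A_m),A_k)=\mu(A_{n+m},A_k)=A_{(n+m)+k}$ while $\mu(A_n,\mu(A_m,A_k))=A_{n+(m+k)}$, and these agree because addition in $\mathbb{Z}_+$ is associative. For the identity I would single out $A_0$. By the first bulleted property $A_0$ is exactly the unit group $U$, which is nonempty since it contains $1$, so $A_0$ is a legitimate class; and $\mu(A_0,A_n)=A_{0+n}=A_n=A_{n+0}=\mu(A_n,A_0)$, so $A_0$ is a two-sided identity.

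I do not anticipate a serious obstacle here: once $\nu$ is read as a monoid homomorphism, every axiom is inherited from $(\mathbb{Z}_+,+)$, and the only genuine input is the additivity $\nu(ab)=\nu(a)+\nu(b)$ established earlier. The one point worth stating carefully is that $\mu$ is well-defined on classes rather than on elements, i.e.\ that the value $A_{n+m}$ does not depend on the representatives $x,y$ — but this too is a one-line consequence of additivity.
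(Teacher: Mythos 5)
Your proposal is correct and follows essentially the same route as the paper: the paper likewise defines $\mu(A_n,A_m)=A_{n+m}$, attributes well-definedness to unique factorization (via $\nu(ab)=\nu(a)+\nu(b)$), and then reads off associativity and the identity $A_0$ from addition in $\mathbb{Z}_+$. You are merely more explicit than the paper's terse proof — in particular in spelling out representative-independence and the nonemptiness of the fibers — which is a welcome but not substantively different elaboration.
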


\begin{proof}
Since $\mu(A_n, A_m)$ is simple denoted $A_n \cdot A_m$  let us use that notation instead. For any $n, m$ and $k$ takes place $(A_n\cdot A_m)\cdot A_k  = A_n\cdot (A_m\cdot A_k)$ There exists an identity element, $A_0$ such that for any $n$,  $A_n \cdot A_0  = A_0 \cdot A_n  = A_n$.
\end{proof}

\begin{theorem}
$R/\ker (\nu)$ has a well - ordering generated by $\nu$. 
\end{theorem}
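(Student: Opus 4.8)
The plan is to transport the standard well-ordering of $\mathbb{Z}_+$ back to $R/\ker(\nu)$ along the map $\nu$, using the fact established in Proposition 2.4 that distinct classes carry distinct $\nu$-values. First I would define the relation $\preceq$ on $R/\ker(\nu)$ that the statement calls ``generated by $\nu$'': set $A_n \preceq A_m$ exactly when $n \le m$, equivalently when $\nu(x) \le \nu(y)$ for some (hence, since $\nu$ is constant on each class by construction, for every) representatives $x \in A_n$ and $y \in A_m$. Well-definedness is immediate, because each $A_n$ consists precisely of the elements with $\nu$-value $n$, so the comparison does not depend on the chosen representatives.

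Next I would verify that $\preceq$ is a linear (total) order. Reflexivity, antisymmetry, and transitivity follow at once from the corresponding properties of $\le$ on $\mathbb{Z}_+$, and totality follows from the trichotomy of $\le$ on the integers. Antisymmetry also invokes Proposition 2.4: if $A_n \preceq A_m$ and $A_m \preceq A_n$, then $n \le m$ and $m \le n$, so $n = m$, and since $A_n \cap A_m = \emptyset$ whenever $n \neq m$, equal $\nu$-values force one and the same class. Thus $(R/\ker(\nu), \preceq)$ is linearly ordered and the assignment $A_n \mapsto n$ is an order-isomorphism onto the subset $\nu(R) \subseteq \mathbb{Z}_+$.

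Finally I would establish the well-ordering property. Let $\mathcal{S}$ be any nonempty family of classes in $R/\ker(\nu)$. Its image under $\nu$ is a nonempty subset of $\mathbb{Z}_+$, which by the well-ordering principle for the non-negative integers possesses a least element $n_0$. By Proposition 2.4 there is exactly one class with $\nu$-value $n_0$, namely $A_{n_0}$, and it lies in $\mathcal{S}$; for any other $A_m \in \mathcal{S}$ we have $n_0 \le m$, hence $A_{n_0} \preceq A_m$. Therefore $A_{n_0}$ is the least element of $\mathcal{S}$, and since every nonempty subfamily has a least element, $\preceq$ well-orders $R/\ker(\nu)$.

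I do not anticipate any serious obstacle, as the argument is essentially a pull-back of the well-ordering of $\mathbb{Z}_+$. The only points demanding care are reading the order off $\nu$ in a representative-independent way and confirming that the least $\nu$-value genuinely corresponds to a least class, which is exactly where the disjointness clause of Proposition 2.4 is needed.
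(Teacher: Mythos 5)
Your proposal is correct and follows essentially the same route as the paper: both define the order on classes by comparing $\nu$-values of representatives and transport the ordering of $\mathbb{Z}_+$ back along $\nu$. The only difference is one of completeness rather than method — the paper stops after defining the order and noting representative-independence, whereas you also carry out the verification of the least-element property for arbitrary nonempty subfamilies, a step the paper leaves implicit.
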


\begin{proof}
Since ordering (or well - ordering for that matter) is a binary relation defined on the elements of $A/\ker(\nu) \times A/\ker(\nu)$ this simple construction. Let $a \in A_n$ and $a' \in A_m$. Define: $A_n  < A_m$ if $\nu(a) < \nu(a')$. Since $\nu$ is well defined this definition does not depended on which element we chose from $A_n$ and $A_m$. This is a natural ordering of $R/\ker(\nu)$.
\end{proof}

The cosets of $R/\ker(\nu)$ are well-ordered and hence can now be subject to infinite descent. The only additional requirement is a definition of true (and by negation false) statements on cosets.

\subsection{The Principle of Infinite Descent for Cosets}

\begin{definition}
Given a statement $T$ defined on any element from $A_n$ then $T(A_n)$ is true if and only if for any element $a' \in A_n$ the statement $T(a')$ is true. $T(A_n)$ is false if there exists $a'' \in A_n$ such that $T(a'')$ is false.
\end{definition}

Since the cosets are well ordered we can now formulate the infinite descent hypothesis:

\begin{theorem}[The Principle of Infinite Descent for Cosets] 
Let $T$ be a class statement and $T(A_1),..., T(A_n)$ are all true. Suppose now that $T(A_{n+1})$ is false.  If it will follow that from $T(A_{n+1})$ being false will imply that one of the $T(A_k)$ with $k<n+1$ is also false we have a contradiction and $T(A_{n+1})$ must have be true. 
\end{theorem}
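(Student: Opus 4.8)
The plan is to recast this as the standard minimal-counterexample argument and to lean entirely on the well-ordering of $R/\ker(\nu)$ supplied by Theorem 2.7. I would first read the descent hypothesis as a \emph{uniform} statement over all cosets: for every $A_m$, if $T(A_m)$ is false then there exists some $k<m$ with $T(A_k)$ also false. This is the operative content of the clause ``from $T(A_{n+1})$ being false will imply that one of the $T(A_k)$ with $k<n+1$ is also false,'' and reading it uniformly (rather than at the single index $n+1$) is what makes the principle usable. I would also recall Definition 2.8, so that ``$T(A_m)$ is false'' means precisely that some element $a''\in A_m$ fails $T$.

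Next I would form the set of counterexample cosets $S=\{A_m : T(A_m)\text{ is false}\}$ and argue by contradiction, assuming $S\neq\emptyset$. By Theorem 2.7 the order on $\{A_m\}$ generated by $\nu$ is simply the inherited well-ordering of $\mathbb{Z}_+$, so every nonempty collection of cosets has a least element; in particular $S$ contains a least element $A_{n_0}$. Here I would invoke the base hypotheses, namely that $T(A_1),\dots,T(A_n)$ are true, to guarantee that no counterexample sits at the bottom of the order, so that $A_{n_0}$ lies strictly above the minimal coset and there is genuine room below it for the descent step to operate.

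The descent step then closes the argument in one line: since $T(A_{n_0})$ is false, the uniform descent hypothesis yields some $k<n_0$ with $T(A_k)$ false, i.e. $A_k\in S$ with $A_k<A_{n_0}$, contradicting the minimality of $A_{n_0}$. Hence $S=\emptyset$, and $T(A_m)$ is true for every $m$; in particular $T(A_{n+1})$ is true, as claimed.

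I do not expect genuine mathematical difficulty here, since the result is essentially the contrapositive of strong induction transported through the well-ordering of $\nu$-values. The main obstacle is therefore interpretive rather than computational: the statement fixes a single threshold $A_{n+1}$ instead of asserting a universal conclusion, so I would take care to phrase the descent hypothesis uniformly across all cosets and to make explicit that well-ordering, equivalently the impossibility of an infinite strictly descending chain in $\{A_m\}$, is exactly what converts the downward implication into a contradiction. The one place demanding caution is the bottom of the order: the descent step cannot produce a coset below the least one ($A_0$, the units), which is precisely why the given base cases are required to exclude a counterexample there.
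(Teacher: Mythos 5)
Your argument is correct under your uniform reading of the hypothesis, but it is a genuinely different route from the paper's. The paper gives this theorem no separate proof at all: the statement is self-proving when read literally, because the descent hypothesis is assumed only at the single index $n+1$, so if $T(A_{n+1})$ were false then some $T(A_k)$ with $k<n+1$ would be false, directly contradicting the assumed truth of $T(A_1),\dots,T(A_n)$ --- a one-step contradiction that needs neither the well-ordering of Theorem 2.7 nor a minimal counterexample. You instead re-read the descent clause uniformly over all cosets and run the standard least-criminal argument on $S=\{A_m : T(A_m)\ \text{false}\}$, concluding $T(A_m)$ for \emph{every} $m$. This is strictly stronger, and it is in fact the form the paper actually invokes in Section 3, where every application begins ``Let $A_{n+1}$ be the first when $(\alpha,\beta,\gamma)$ is a solution,'' i.e., a least counterexample; so your reading buys the principle the applications need, while the literal statement buys only a single-index observation. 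One point of caution in your write-up: the listed base cases start at $A_1$, yet $k<n+1$ admits $k=0$, and your closing remark credits the given base cases with excluding a counterexample at the bottom of the order --- but as stated they do not cover $A_0$ (the units). In the applications the paper verifies $A_0, A_1, A_2$ directly, so the bottom coset is in fact among the checked cases there; in your formulation you should either include $A_0$ among the base hypotheses or note that the uniform descent hypothesis vacuously forces $T(A_0)$ to be true (no $k<0$ exists), since otherwise your minimal-counterexample step could terminate at $A_0$ without producing the contradiction.
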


\section{The Quartic Equations $aX^4+bX^2 Y^2+cY^4=dZ^2$ over Gaussian Integers}

\subsection{The Equation $X^4+Y^4=Z^2$  (Fermat - Hilbert)}

\begin{theorem}
The equation $X^4+Y^4=Z^2$, where $\gcd(X, Y, Z) \in U$ and $XYZ \neq 0$, has no non-trivial solutions over Gaussian integers.
\end{theorem}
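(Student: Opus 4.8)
The plan is to carry out Fermat's classical descent inside the UFD $\mathbb{Z}[i]$, organized through the coset machinery of Section~2 with the descent performed on $\nu(Z)$. First I would observe that the hypothesis $\gcd(X,Y,Z)\in U$ already forces $\gcd(X,Y)\in U$, since a common prime of $X$ and $Y$ would divide $Z^2=X^4+Y^4$ and hence $Z$. I would then form the resolvent by factoring the left-hand side:
$$Z^2 = X^4+Y^4 = (X^2+iY^2)(X^2-iY^2).$$
Writing $\lambda=1+i$ for the unique prime (up to associates) above $2$, with $2=-i\lambda^2$, I would compute $d=\gcd(X^2+iY^2,\,X^2-iY^2)$. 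Since $d$ divides both the sum $2X^2$ and the difference $2iY^2$, and $\gcd(X,Y)\in U$ gives $\gcd(X^2,Y^2)\in U$, it follows that $d\mid 2$, so $d$ is an associate of $\lambda^k$ with $0\le k\le 2$. This is the resolvent: the two factors are coprime away from $\lambda$.

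Next I would split into cases according to the residues of $X$ and $Y$ modulo $\lambda$. In the generic case, where the two factors are genuinely coprime, unique factorization in $\mathbb{Z}[i]$ forces each to be a unit times a square,
$$X^2+iY^2 = u\,\alpha^2, \qquad X^2-iY^2 = u^{-1}\beta^2,$$
with $\alpha\beta$ an associate of $Z$ and $u\in U$. Absorbing the unit $u$ (one of $1,-1,i,-i$) and separating the real and imaginary parts of the first relation yields a second-degree system — a fresh Pythagorean-type relation among $X,Y$ and the new parameters — from which I would read off a solution $(X',Y',Z')$ of the same equation (or of one of the companion equations of the family $aX^4+bX^2Y^2+cY^4=dZ^2$) with $Z'$ a proper divisor of $Z$, so that $\nu(Z')<\nu(Z)$. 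The case $\lambda\mid d$ requires separate treatment: I would factor out the appropriate power of $\lambda$, track residues modulo $\lambda^2$ and $\lambda^3$ to determine which of $X,Y,Z$ carries the factor $\lambda$, and again reduce to a square decomposition producing a strictly smaller solution.

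Finally I would phrase the conclusion through Theorem~2.8. Let $T(A_n)$ assert that no nontrivial solution of $X^4+Y^4=Z^2$ has $Z\in A_n$. The smallest cosets $A_0,A_1$ (units and primes) admit no nontrivial solution by direct inspection, so $T(A_0),T(A_1)$ hold. The descent step shows that a nontrivial solution with $Z\in A_{n+1}$ produces one with $Z'\in A_k$, $k<n+1$; hence the falsity of $T(A_{n+1})$ forces the falsity of some $T(A_k)$ with $k\le n$. By the Principle of Infinite Descent for Cosets, $T(A_n)$ holds for all $n$, and the equation has only trivial solutions.

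The main obstacle I anticipate is the bookkeeping around the ramified prime $\lambda=1+i$ and the four units of $\mathbb{Z}[i]$: both the unit $u$ and the factor $iY^2$ can twist the real/imaginary decomposition, so the delicate point is verifying that after absorbing units the resolvent really reproduces an instance of the \emph{same} family of equations rather than an inequivalent one, and that the descent is strict at the level of $\nu$. Managing the $\lambda$-adic cases so that they too feed back into the same family, rather than stalling, is where the argument will demand the most care.
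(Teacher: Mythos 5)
Your overall frame (a resolvent factorization, unit-times-square decomposition by unique factorization, and descent organized through the cosets of Theorem 2.8) is in the paper's spirit, but there is a genuine gap at the heart of the argument: the descent step is asserted, not proved, and in the form you state it, it fails. From $X^2+iY^2=u\alpha^2$ and $X^2-iY^2=u^{-1}\beta^2$, the natural elimination gives $u^2\alpha^4-u^{-2}\beta^4=(X^2+iY^2)^2-(X^2-iY^2)^2=4iX^2Y^2$; for instance, for $u=1$ this is $\alpha^4-\beta^4=i(2XY)^2$. That is not the original equation but a companion of Szab\'o--Najman type, and such companions are \emph{not} uniformly insolvable over $\mathbb{Z}[i]$: the paper's own Theorem 3.3 shows that $X^4+Y^4=iZ^2$ has the nontrivial solutions $(i^s,i^t,\pm i(1+i))$. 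So "read off a solution of the same equation (or of a companion) with $Z'$ a proper divisor of $Z$" cannot be waved through: a descent that lands in a companion equation can terminate at one of its nontrivial small solutions, and ruling this out is precisely the unit-by-unit congruence analysis you defer as "bookkeeping" — it is the core of the proof, not a peripheral obstacle. Compounding this, your strict-decrease claim is unsubstantiated: in the relation above the new right-hand entry is $2XY$ (and the new left-hand variables are $\alpha,\beta$), and $2XY$ is not in general a divisor of $Z$, so $\nu(Z')<\nu(Z)$ does not follow. Your base case ("$A_0,A_1$ by direct inspection" on $Z$) also needs an actual argument, e.g.\ a norm bound via your factorization, since $X,Y$ are unconstrained when $Z$ is small.

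For comparison, the paper avoids both pitfalls. It parametrizes the Pythagorean-type triple $(X^2)^2+(Y^2)^2=Z^2$ via the cited companion paper: $\alpha^2=i^{t+1}(P'^2-(-1)^tQ'^2)$, $\beta^2=(1+i)^2P'Q'$, $\gamma=i^{t+1}(P'^2+(-1)^tQ'^2)$, deduces $P'=P^2$, $Q'=Q^2$ from coprimality, and then disposes of the unit ambiguity case by case: $t=0,1,2$ are killed outright by congruences on real parts modulo $4$ (e.g.\ $R(\alpha^2)\equiv 1 \bmod 4$ for $\alpha\in O^I$ against a forced $R(\alpha^2)\equiv 0 \bmod 4$), and only $t=3$ survives, yielding $P^4+Q^4=\alpha^2$ — a solution of the \emph{same} equation in which the new $Y$-entry $Q$ is a proper divisor of the old $\beta$. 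Accordingly the paper's descent measure is $\nu(Y)$ (with base cases $\nu(\beta)\geq 3$ supplied by the cited parametrization), not $\nu(Z)$. These two ingredients — the mod-$4$ unit analysis that keeps the descent inside the same equation, and the choice of $\nu(\beta)$ as the quantity that strictly decreases — are exactly what your proposal is missing; without them it is a plan for a proof rather than a proof.
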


\begin{proof}
Let $(\alpha, \beta, \gamma)$ be some solution of the equation then $\nu(\beta)\geq 3$. Thus the equation has no solution if $\beta$ belongs to $A_0, A_1, A_2$.  Let $A_{n+1}$ be the first when $(\alpha, \beta, \gamma)$ is any solution the equation where $\beta$ belongs to $A_{n+1}$. Let $\alpha, \gamma$ belong to $O^I$, and $\beta=(1+i)^{2+m} p_2^{\alpha_2 }...p_n^{\alpha_n}$, where $p_i$  are distinct primes and belong to $O^I$ (see \cite{Sidokhine:2016aa}). Then, according to \cite{Sidokhine:2016aa},
\begin{equation*} 
\alpha^2=i^{t+1} (P'^2-(-1)^t Q'^2), \beta^2=(1+ i)^2 P'Q', \gamma =i^{t+1} (P'^2+(-1)^t Q'^2)
\end{equation*}
$\gcd(P', Q') = 1, P'Q'\equiv  0 \mod (1+ i)$. Thus $P'=P^2, Q'=Q^2$ where $P$ or $Q$ belongs to $O^I$.
\begin{equation*}
\alpha^2=i^{t+1} (P^4-(-1)^t Q^4) , \beta^2 = (1+ i)^2 P^2 Q^2, \gamma =i^{t+1} (P^4+(-1)^t Q^4)
\end{equation*}

Let $t = 0$ then $\alpha^2=i(P^4-Q^4)$. Since $\alpha$ is odd, $\gcd(P, Q) = 1$ and $PQ\equiv 0 \mod (1+i)$ we can suppose that $P, Q$ are odd and even respectively. Then $R(\alpha^2)+I(P^4)-I(Q^4)=0$. Thus $R(\alpha^2)\equiv 0 \mod 4$ but since $\alpha \in O^I$ we have $R(\alpha^2) \equiv  1 \mod 4$. That is a contradiction. 

Let $t = 1$ then $\alpha^2=-(P^4+Q^4)$ and $R(\alpha^2)+R(P^4)+R(Q^4) = 0$. Then $R(\alpha^2 )+R(P^4 )\equiv 0 \mod 4$ as $\alpha \in O^I$ so $R(\alpha^2)+R(P^4)\equiv 2 \mod 4$. That is a contradiction.

Let $t = 2$ according to case $t = 0$, $R(\alpha^2)\equiv 0 \mod 4$ but $R(\alpha^2)\equiv 1 \mod 4$. That is a contradiction. 

Let $t = 3$ then $P^4+Q^4=\alpha^2$. Thus $(\alpha', \beta', \gamma')$ where $\alpha' = P, \beta' = Q, \gamma' = \alpha$ is the solution of given equation $X^4+Y^4=Z^2$ where $\beta'$ is a proper divisor of $\beta$ so $\nu(\beta')< \nu(\beta)$. Thus $\beta'$ belongs to $A_k<A_{n+1}$. That is a contradiction. 

Thus the equation  $X^4+Y^4=Z^2$  has no non-trivial solutions over $\mathbb{Z}[i]$.
\end{proof}

\subsection{The Equations $X^4+iY^2=Z^4$ ; $X^4+Y^4=iZ^2$ (Szab{\'o}  - Najman)}

\begin{theorem}
The equation $X^4+iY^2=Z^4$ where $\gcd(X, Y, Z)\in U$ and $XYZ \neq 0$, has no non-trivial solutions over Gaussian integers.
\end{theorem}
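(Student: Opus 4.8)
The plan is to run the same coset infinite descent used for $X^4+Y^4=Z^2$, via the Principle of Infinite Descent for Cosets, after putting the equation into a shape the resolvent of \cite{Sidokhine:2016aa} can digest. Reading $X^4+iY^2=Z^4$ as $Z^4-X^4=iY^2$, I recognize the Szab\'o--Najman form $X^4-Y^4=iZ^2$: a difference of two fourth powers equal to a unit times a square. Setting $A=X^2$, $C=Z^2$ turns this into the twisted Pythagorean relation $C^2=A^2+iY^2$, where $A,C$ must themselves be squares. As in the first proof I would take a minimal solution $(\alpha,\beta,\gamma)$ with $\alpha,\gamma\in O^I$ (so $X,Z$ are odd) and $\beta=Y$ the coordinate carrying the $(1+i)$-power, running the descent on the coset $A_{n+1}$ that is the least one admitting a solution with $\beta\in A_{n+1}$.

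First I would record the base case exactly as before: a solution forces $\nu(\beta)$ to be bounded below (the first proof gave $\nu(\beta)\geq 3$), so $A_0,A_1,A_2$ carry no solution and the descent has a floor. Next, feed the twisted triple into the resolvent to obtain a parametrization of the shape $X^2=i^{t+1}\bigl(P'^2-(-1)^tQ'^2\bigr)$, $Z^2=i^{t+1}\bigl(P'^2+(-1)^tQ'^2\bigr)$, with the $(1+i)^2P'Q'$ factor (up to the $i$-twist) absorbed into $Y^2$, where $\gcd(P',Q')=1$ and $P'Q'\equiv 0 \bmod (1+i)$. Coprimality of the two factors then upgrades to $P'=P^2$, $Q'=Q^2$, with one of $P,Q$ lying in $O^I$.

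The heart of the argument is the case split on the unit $i^t$, $t\in\{0,1,2,3\}$, run precisely as in the Fermat--Hilbert proof. In three of the four branches I would pass to real and imaginary parts modulo $4$, using $R(\alpha^2)\equiv 1 \bmod 4$ for $\alpha\in O^I$ together with the congruences on $R(P^4),R(Q^4),I(P^4),I(Q^4)$, to collide a quantity that must be $\equiv 1$ with one forced to be $\equiv 0$ or $\equiv 2$. In the surviving branch the displayed relation rearranges into a fresh instance $P^4\pm Q^4=(\text{unit})\,\alpha^2$ of the same equation, whose descent coordinate $\beta'$ is a proper divisor of $\beta$, so $\beta'\in A_k$ with $k<n+1$; this contradicts minimality and closes the descent.

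The main obstacle is producing the correct \emph{twisted} resolvent. The factor $i$ on $Z^2$ (equivalently $iY^2$ on the right) changes the unit $i^{t+1}$ attached to each branch and shifts the exact power of $(1+i)$ dividing $\beta$, so neither the base-case bound nor the assignment of ``contradiction branch'' versus ``descent branch'' can simply be copied from the untwisted case. A further delicacy, absent in the first proof, is that here \emph{both} $X^2$ and $Z^2$ carry square-constraints (not just one), so I must check that the surviving branch reproduces a genuine instance of $X^4+iY^2=Z^4$ rather than a sign- or unit-variant. I expect the mod-$4$ bookkeeping still to close three branches, with the $i$-twist rotating which residue ($0$ versus $2$) surfaces and hence which value of $t$ carries the descent; verifying that exactly one branch descends consistently is the step that needs the most care.
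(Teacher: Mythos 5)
Your plan has a structural flaw that you half-sensed yourself but did not resolve, and it is fatal to the descent as proposed. In $X^4+iY^2=Z^4$ the descent coordinate $Y$ enters only to the \emph{second} power, so the resolvent gives $\beta=(1+i)^2PQ$ directly --- not $\beta^2=(1+i)^2P'Q'$ as in the Fermat--Hilbert proof. Consequently $PQ$ need not be a perfect square, the upgrade ``$P'=P^2$, $Q'=Q^2$'' on which your entire descent step rests is simply unavailable, and the surviving branch cannot be rearranged into a smaller instance of the same equation with a proper divisor $\beta'$ of $\beta$. Your closing remark that one must ``check that the surviving branch reproduces a genuine instance of $X^4+iY^2=Z^4$'' is exactly where the argument breaks: it does not, and no minimal-coset bookkeeping, base case $\nu(\beta)\geq 3$, or mod-$4$ analysis of $R(P^4)$, $I(Q^4)$ can repair that, because those devices belong to Theorem 3.1, whose descent is driven by the square-extraction step you cannot perform here.

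The paper's proof contains no new descent at all; it is a reduction. After the single application of the resolvent,
\begin{equation*}
\alpha^2=i^{1-t}\bigl(P^2-(-1)^t iQ^2\bigr),\quad \beta=(1+i)^2PQ,\quad \gamma^2=i^{1-t}\bigl(P^2+(-1)^t iQ^2\bigr),
\end{equation*}
the even cases $t=0,2$ give the simultaneous relations $\alpha^2=iP^2+Q^2$ and $\gamma^2=iP^2-Q^2$, which are killed not by mod-$4$ real-part congruences but by incompatible $(1+i)$-divisibility demands on $P$ coming from the study of $X^4+iY^2+Z^4=0$ in \cite{Sidokhine:2016aa} (one relation forces $P\equiv 0 \bmod (1+i)^2$, the other forces $(1+i)\,\Vert\,P$). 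In the odd cases $t=1,3$ one multiplies the two parametrized relations to obtain $(\alpha\gamma)^2=P^4+Q^4$, which is impossible by Theorem 3.1 --- where the infinite descent was already carried out once and for all. Notably, the feature you identified as a ``further delicacy'' (that \emph{both} $X^2$ and $Z^2$ carry square constraints) is precisely the asset the paper exploits: two parametrized squares multiply into a fourth-power identity, handing the problem to the previously proven theorem rather than to a smaller copy of itself.
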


\begin{proof}
Let $(\alpha, \beta, \gamma)$ be a solution of the equation. Let $\alpha, \gamma$ belong to $O^I$, and $\beta=(1+i)^{2+m} p_2^{\alpha_2}...p_n^{\alpha_n}$, where $p_i$ are distinct primes and belong to $O^I$ be a solution of the equation \cite{Sidokhine:2016aa}, we have $\alpha^2=i^{1-t} (P^2-(-1)^t iQ^2), \beta=(1+ i)^2 PQ, \gamma^2=i^{1-t} (P^2+(-1)^t iQ^2)$.

Let $t = 0$ or $2$ then $\alpha^2=i(P^2-iQ^2), \gamma^2=i(P^2+iQ^2)$ or $\alpha^2=-i(P^2-iQ^2 ), \gamma^2=-i(P^2+iQ^2)$  where $\gcd(P, Q) = 1, PQ \equiv  0 \mod (1+ i)$ and $P$ or $Q$ belongs to $O^I$. Thus 
$\alpha^2=iP^2+Q^2, \gamma^2=iP^2-Q^2$.

The equation $X^4+iY^2 + Z^4=0$, according to \cite{Sidokhine:2016aa}, for the first equality $P\equiv 0 \mod (1+ i)^2$, for the second equality $P\equiv 0 \mod (1+ i)$ and $P\nequiv 0 \mod (1+ i)^2$. That is a contradiction.

Let $t = 1, 3$ then we have $\alpha^2= P^2+iQ^2, \gamma^2= P^2-iQ^2$,
or $\alpha^2=-P^2-iQ^2, \gamma^2=-P^2+iQ^2$. Thus we have $(\alpha\gamma)^2=P^4+Q^4$. This equality is impossible according to theorem 3.1. Thus the equation $X^4+iY^2=Z^4$ has no non-trivial solutions over $\mathbb{Z}[i]$.
\end{proof}

\begin{theorem}
The equation $X^4+Y^4=iZ^2$, where $\gcd(X, Y, Z)\in U$ and $XYZ \neq 0$, has only the finite number of non-trivial solutions.
\end{theorem}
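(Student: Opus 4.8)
The plan is to run the resolvent method of Theorems 3.1 and 3.2, but to keep careful track of the fact that the coefficient $i$ on the right-hand side is a \emph{non-square} unit of $\mathbb{Z}[i]$ (the only unit squares are $\pm1$); this is exactly the feature that turns ``no solutions'' into ``finitely many.'' First I would fix a solution $(\alpha,\beta,\gamma)$ with $\gcd(\alpha,\beta,\gamma)\in U$ and reduce modulo $(1+i)$. If exactly one of $\alpha,\beta$ is even then $\alpha^4+\beta^4$ is odd and $\gamma$ is odd; if both $\alpha,\beta$ are odd then $\alpha^4+\beta^4\equiv 0 \bmod (1+i)^2$, forcing $\gamma$ even. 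These two parity regimes must be handled separately. The expectation is that the odd-$\gamma$ regime is empty (by congruence obstructions as in Theorem 3.1), while the both-odd regime produces the finitely many solutions already visible by hand, e.g. $1^4+1^4=2=i(1-i)^2$, so $(1,1,1-i)$ and $(1,i,1-i)$ are genuine solutions.

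For the resolvent I would factor $\alpha^4+\beta^4=(\alpha^2+i\beta^2)(\alpha^2-i\beta^2)=i\gamma^2$ and analyze the two factors. In the both-odd regime $\alpha^2\equiv\beta^2\equiv 1 \bmod (1+i)^2$, so $\alpha^2+i\beta^2\equiv 1+i$ and $\alpha^2-i\beta^2\equiv 1-i \bmod (1+i)^2$; hence each factor is divisible by $(1+i)$ exactly once and their remaining parts are coprime. This gives $\alpha^2+i\beta^2=(1+i)u_1M^2$ and $\alpha^2-i\beta^2=(1+i)u_2N^2$ with $M,N$ odd and coprime, and, since $\gamma^2=2u_1u_2(MN)^2$ must be a perfect square while $2=-i(1+i)^2$, the units satisfy $u_1u_2\in\{i,-i\}$. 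Solving the linear system yields
\[
(1-i)\alpha^2=u_1M^2+u_2N^2,\qquad (1+i)\beta^2=u_1M^2-u_2N^2 .
\]
As in Theorem 3.1 I would then split on the unit parameter (the analogue of $t$) and discard the inadmissible unit choices by comparing $R(\cdot)$ and $I(\cdot)$ modulo $4$, using precisely the $R(\alpha^2)\equiv 1 \bmod 4$ obstruction that drives the earlier proofs; the odd-$\gamma$ regime I would kill the same way.

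In the surviving unit case I would multiply the two relations. Because $u_2^2=-u_1^2$, the cross terms cancel and it collapses to
\[
M^4+N^4=\pm\,2(\alpha\beta)^2 ,
\]
an equation of shape $X^4+Y^4=2Z^2$ in the coprime pair $M,N$. The decisive step is to show this forces $M$ and $N$ to be associate, so that coprimality makes both units, after which the linear system pins $\alpha^2,\beta^2$ and $\gamma$ to an explicit finite list of unit-combinations and the theorem follows. To establish that, I would square and subtract to get $(M^4-N^4)^2=4\big(W^4-(MN)^4\big)$ with $W=\alpha\beta$ up to a unit, so that $W^4-(MN)^4$ is, up to a unit, a perfect square: a Fermat-type equation $X^4-Y^4=Z^2$. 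Its only trivial solutions over $\mathbb{Z}[i]$, obtained by the infinite-descent-for-cosets principle (Theorem 2.8) applied to the strictly smaller odd cofactor, force $M^4=N^4$, i.e. $M^2=\pm N^2$ and hence $M,N\in U$.

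The main obstacle is exactly this last reduction. The reason finiteness rather than emptiness is the right answer is structural: the map $(\alpha,\beta,\gamma)\mapsto\big(M,N,\,i(1+i)\alpha\beta\big)$ sends solutions of $X^4+Y^4=iZ^2$ to solutions of the same equation, and it is \emph{size-neutral} — the $(1+i)$-valuation of the even coordinate is pinned to $1$ while $\nu$ of the even coordinate merely oscillates with period two — so this map alone can never terminate in a contradiction. Finiteness must therefore be extracted from the companion equation $X^4+Y^4=2Z^2$ forcing the parameters into $U$, together with a clean verification that the odd-$\gamma$ regime is genuinely empty. The delicate points will be proving (or invoking, in the style of the Fermat–Hilbert and Szab\'o–Najman arguments) that $X^4-Y^4=Z^2$ has only trivial solutions over $\mathbb{Z}[i]$, and then enumerating the base solutions with $M,N\in U$ to confirm that, under the action of the units, the entire solution set is finite.
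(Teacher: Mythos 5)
Your reductions up to the companion equation are sound and in fact land exactly where the paper lands: the factorization $\alpha^4+\beta^4=(\alpha^2+i\beta^2)(\alpha^2-i\beta^2)$, the unit bookkeeping $u_1u_2=\pm i$, and the product relation $M^4+N^4=\pm 2(\alpha\beta)^2$ reproduce, by a more hands-on route, the paper's resolvent conclusion $2(\alpha\beta)^2=P^4+Q^4$ (the paper gets there by quoting parametrizations of $X^2+iY^2+(1+i)Z^2=0$ from its reference and eliminating every $t\neq 0$); your degenerate case $M^4=N^4$ yields the same finite list $(i^s,i^t,\pm i(1+i))$ as the paper's Case 1, $P^2-Q^2=0$, and your disposal of the odd-$\gamma$ regime by congruences modulo $4$ is correct.

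The genuine gap is at the decisive step. From $(M^4-N^4)^2=4\bigl(W^4-(MN)^4\bigr)$ you reduce everything to the claim that $X^4-Y^4=Z^2$ has only trivial solutions over $\mathbb{Z}[i]$, which you then merely propose to prove ``or invoke.'' That statement is nowhere available in the paper: Theorem 3.1 treats $X^4+Y^4=Z^2$ and Theorem 3.2 treats $X^4+iY^2=Z^4$, and no unit substitution in $\mathbb{Z}[i]$ converts either into $X^4-Y^4=Z^2$, since that would require a square root of $\pm i$. The claim is true, but it is a theorem of essentially the same depth as the one being proved; an elementary proof needs its own multi-branch $(1+i)$-descent, which you do not supply beyond the phrase ``applied to the strictly smaller odd cofactor'' --- and that descent quantity is underspecified, because in $X^4-Y^4=Z^2$ any of the three coordinates may be the even one, whereas the paper's whole machinery descends on $\nu$ of a specific even coordinate. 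The paper avoids this outsourcing entirely: in the nondegenerate case it rewrites $2(\alpha\beta)^2=P^4+Q^4$ as the concordant pair $u^2\pm v^2=(\alpha\beta)^2$, $u^2\mp v^2=(PQ)^2$ with $u=\frac{P^2+Q^2}{2}$, $v=\frac{P^2-Q^2}{2}$, parametrizes both equations, and descends on the second-degree system $n^2+m^2=n'^2-m'^2$, $nm=n'm'$ via the four-gcd factorization $n=(n,n')(n,m')$, $m=(m,n')(m,m')$, etc., so that $\nu$ of the even product strictly drops --- a self-contained Mordell-style lemma rather than an appeal to a second quartic theorem. To complete your argument you must either give a full coset-descent proof of $X^4-Y^4=Z^2$ over $\mathbb{Z}[i]$, comparable in length to Theorem 3.1, or replace that step by a concordant-form descent as the paper does.
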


\begin{proof}
Let $(\alpha, \beta, \gamma)$ be a solution of the equation then $\nu(\beta) \geq 3$. The equation has no solution if $\gamma$ belongs to $A_0, A_1, A_2$.  Let $A_{n+1}$ be the first when $(\alpha, \beta, \gamma)$ is a solution the equation, where $\gamma \in A_{n+1}$. According to \cite{Sidokhine:2016aa}, $\alpha, \beta \in O^I$, $\gamma=i^m (1+ i)p_2^{\alpha_2}...p_n^{\alpha_n}$, where $m$ is equal $0$ or $1$ and $p_i$   are distinct primes and belong to $O^I$. Then, according to \cite{Sidokhine:2016aa},
\begin{equation*} 
\alpha^2=i^t \frac{P^2\mp (-1)^t iQ^2}{1+ i}, \beta^2=i^{t+1} \frac{P^2\pm(-1)^t iQ^2}{1+ i}, \gamma = i^{\frac{1 \mp 1}{2}} (1+i)PQ
\end{equation*}
where $P, Q$ belong to $O^I$ and $\gcd(P, Q) = 1$. Let us show that for $t$ is possible value only $0$. Indeed according to the study of the equation $X^2+iY^2+(1+ i) Z^2 = 0$, \cite{Sidokhine:2016aa} , we have
\begin{itemize}
\item Let $t=0$ then $(1+ i) \alpha^2=P^2-iQ^2, (1+ i) \beta^2=iP^2-Q^2$ that it is impossible.
\item Let $t=0$ then $(1+ i) \alpha^2=P^2+iQ^2, (1+ i) \beta^2=iP^2+Q^2$ that it is possible. 
\item Let $t=1$ then $(1+i) \alpha^2=iP^2 \mp Q^2, (1+i) \beta^2=-P^2\pm iQ^2$ that it is impossible. 
\item Let $t= 2$ then $(1+i) \alpha^2=-P^2 \pm iQ^2, (1+i) \beta^2=-iP^2\mp Q^2$ that it is impossible.
\item Let $t= 3$ then $(1+i) \alpha^2=-iP^2\mp Q^2, (1+i) \beta^2=-P^2\mp iQ^2$ that it is impossible. 
\end{itemize}
Thus we have the following system of equalities
\begin{equation*}
\alpha^2=\frac{P^2+iQ^2}{1+ i}, \beta^2=\frac{iP^2+Q^2}{1+ i}, \gamma = i(1+ i)PQ.
\end{equation*}
Then $\alpha\beta, P, Q$ satisfy the following equality
\begin{equation*}
2(\alpha\beta)^2=P^4+Q^4
\end{equation*}
where $\alpha, \beta, P, Q$ belong to $O^I$ and $\gcd(P, Q) = 1$. So we can write the following equality
\begin{equation*}
(\frac{P^2+Q^2}{2})^2+(\frac{P^2-Q^2}{2})^2  =(\alpha\beta)^2
\end{equation*}
where $\gcd( \frac{P^2+Q^2}{2}, \frac{P^2-Q^2}{2}) = 1$ and  $\frac{(P^2+Q^2)(P^2-Q^2)}{4}\equiv  0 \mod(1+ i)^2$. 
Possible two cases: 
\begin{itemize}
\item Case 1: Let $P^2-Q^2= 0$ then $(i^s, i^t, \pm i(1+i))$ where $0 \leq s, t \leq 3$ is the full set of solutions of the equation $X^4+Y^4=iZ^2$.
\item Case 2: $P^2-Q^2 \neq 0$ then equation $X^4+Y^4=iZ^2$ has no solution. Indeed, let us consider the system of the equations
 \end{itemize}
 
\begin{equation*}
\begin{cases}
 (\frac{P^2+Q^2}{2})^2+(\frac{P^2-Q^2}{2})^2 =(\alpha\beta)^2 \\             
  (\frac{P^2+Q^2}{2})^2-(\frac{P^2-Q^2}{2})^2 =(PQ)^2.
\end{cases}
\end{equation*}  
  
Notice. Since $\alpha, \beta, P, Q \in O^I$ and $\gcd(\alpha, \beta) = \gcd(P, Q) = 1$. One can write the following equalities $\frac{P^2+Q^2}{2}=u$, where $u \in O^I$, and $\frac{P^2-Q^2}{2}= i^l v$ where  $l = 0;1, v=(1+ i)^{2+r} w$  and  $w \in O^I$.

Thus we can write the equalities where 
\begin{equation*}
\begin{cases}
u^2\pm v^2=(\alpha\beta)^2 \\
u^2\mp v^2=(PQ)^2
\end{cases}
\end{equation*}

We take only up or down sings of the system equations. According to the study of the equation $X^2+Y^2+Z^2=0$ \cite{Sidokhine:2016aa} we have the following equalities
\begin{eqnarray*}
      u = i^{t+1} (m^2-(-1)^t n^2 ),   v = (1+ i)^2 mn \\
      u = i^{s+1} (m'^2+(-1)^s n'^2),   v = (1+ i)^2 m'n'
\end{eqnarray*}

where $\gcd(m, n) = \gcd(m', n') = 1$, $mn\equiv m'n'\equiv 0 \mod(1+ i)$ and $m$ or $n$ belong to $O^I$ and also and $m'$ or $n'$ belong to $O^I$. Show that  $t-s\equiv 0 \mod 2$. Let  $t-s \nequiv 0 \mod 2$  then $t - s = 2k +1$ and
\begin{equation*} 
i^{s+1+2k+1} (m^2-(-1)^{s+1} n^2 )=i^{s+1} (m'^2+(-1)^s n'^2 )
\end{equation*}
or 			          
\begin{equation*}
 (-1)^k i(m^2+(-1)^s n^2)=(m'^2+(-1)^s n'^2)
 \end{equation*}
But                
\begin{equation*}
(-1)^k R(i(m^2+(-1)^s n^2))=(-1)^k I(m^2)+(-1)^{k+s} I(n^2)\equiv 0 \mod 2		
\end{equation*}
then as
\begin{equation*}
R(m'^2+(-1)^s n'^2)=R(m'^2)+(-1)^s R(n'^2)\equiv 1 \mod 2. 
\end{equation*}
We have a contradiction. Thus $t - s\equiv 0 \mod 2$ and we have
\begin{equation*}
(-1)^k (m^2-(-1)^s n^2 )= m'^2+(-1)^s n'^2.
\end{equation*}
Since $(k, s)$ takes the values $(0, 0), (0, 1), (1, 0), (1, 1)$ they leads to the system of the equations
\begin{equation*}
\begin{cases}
n^2+m^2=n'^2-m'^2 \\
nm = n'm' 
\end{cases}
\end{equation*}

Let $m\equiv m'\equiv 0 \mod(1+ i)$ then $n, n'\in O^I$ since $nm = n'm'$ and $\gcd (n, m) = \gcd(n', m') = 1$.

Let $(n, m, n', m')$ be a solution of the system of equations then $\nu(nm)\geq3$. The system of equations has no solution if the product $nm$ belongs to $A_0, A_1, A_2$. Let $A_{n+1}$ be the first when $(n, m, n', m')$ is a solution the equation where $nm \in A_{n+1}$.

Further we use the notation $\gcd(m, n) = (m, n)$. The following factorization is possible. 
\begin{eqnarray*}
n  =  (n, n')(n, m'),	 m  =  (m, n')(m, m') \\
n' =  (n', n)(n', m),	 m' =  (m', n)(m', m)
\end{eqnarray*}
Let us make substitution 
\begin{equation*}
(n, n')^2 (n, m')^2+(m, n')^2 (m, m')^2=(n', n)^2 (n', m)^2-(m', n)^2 (m', m)^2
\end{equation*}
then we have
\begin{equation*}
(n, m')^2 ((n, n')^2+(m', m)^2)=(n', m)^2 ((n, n')^2-(m, m')^2).
\end{equation*}
Since  $\gcd((n, m'), (m, n')) = \gcd( (n, n')^2+(m', m)^2, (n, n')^2-(m', m)^2) = 1$. Thus
\begin{equation*} 
k = ((n, n')^2-(m', m)^2)/(n, m')^2=((n, n')^2+(m', m)^2)/(m, n')^2
\end{equation*}
where $k = \pm1$ or $\pm i$. However, the value $k$ can not be equal to $(-1)$ or $(\pm i)$. Since $(n, n'), (n, m')$ and $(n', m)$ belong to $O^I$ we have for case 1:
\begin{equation*}
(n', n)^2-(m, m')^2= -(n, m')^2
\end{equation*}
but $(n', n)^2, - (n, m')^2$ must have the same sign. That is a contradiction. For case 2 we have
\begin{equation*}
(n', n)^2-(m, m')^2= \pm i(n, m')^2
\end{equation*}
But $(n,m')$ must be even. That it is contradiction.  Thus we have gotten the system of equalities
\begin{equation*}
\begin{cases}
\alpha'^2=P'^2+Q'^2 \\
\gamma'^2=P'^2-Q'^2
\end{cases}
\end{equation*}
where $\alpha' = (n, m')$, $\gamma'= (m, n')$, $P'= (n', n)$, $Q' = (m', m)$ and $Q'$ is a divisor of the product $nm$.
The expressions for $P', Q'$ of the first equality have the following form 
\begin{equation*}
P' = u^2-v^2, Q'=(1+i)^2 uv, \gcd(u, v) = 1.
\end{equation*}
The expressions for $P', Q'$ of the second equality have the following form
\begin{equation*}
P' = u'^2+v'^2, Q'=(1+i)^2 u'v', \gcd(u', v') = 1.
\end{equation*}
Thus we have the following system of the equalities
\begin{equation*}
\begin{cases}
u^2+v^2  = u'^2-v'^2 \\  
uv = u'v' \\
\gcd(u, v) = \gcd(u', v') = 1
\end{cases}
\end{equation*}
where $(u, v, u', v')$ is a solution of the system of equations. Since the product $uv$ is a proper divisor $Q'$ so $\nu(uv) < \nu(nm)$. Thus the product $uv$ belongs to $A_k<A_{n+1}$. That is a contradiction. Thus the equation $X^4+Y^4=iZ^2$ has only a finite number of the non-trivial solutions over $\mathbb{Z}[i]$.
\end{proof}

\subsection{The Equations $X^4+(1+i) Y^2=Z^4$; $X^4+Y^4=(1+i) Z^2$}

\begin{theorem}
The equation $X^4+(1+i) Y^2=Z^4$, where $\gcd(X, Y, Z)\in U$ and $XYZ \neq 0$, has no non-trivial solutions over Gaussian integers.
\end{theorem}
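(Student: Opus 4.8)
The plan is to mirror the strategy of Theorem 3.2 and Theorem 3.3: pass from the quartic to its quadratic resolvent, eliminate the forbidden residue classes by a real-part congruence, and send the surviving class into an infinite descent. First I would take a non-trivial solution $(\alpha,\beta,\gamma)$, normalised so that $\alpha,\gamma\in O^I$, and record that $\nu(\beta)\geq 3$, so that no solution exists with $\beta\in A_0,A_1,A_2$; let $A_{n+1}$ be the least coset carrying a solution. Writing the equation as the difference of two squares
\begin{equation*}
(\gamma^2)^2-(\alpha^2)^2=(1+i)\beta^2,
\end{equation*}
and applying the resolvent of \cite{Sidokhine:2016aa} for the form $X^2-Y^2=(1+i)Z^2$, I would obtain a parametrisation of $\alpha^2$, $\gamma^2$ and $\beta$ in terms of coprime $P,Q$ and a unit parameter $t\in\{0,1,2,3\}$, with $\beta$ equal to a fixed power of $1+i$ times $PQ$.

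The argument then splits on $t$. For the parities to be discarded I would invoke the same obstruction used in Theorems 3.1 and 3.3: since $\alpha\in O^I$ one has $R(\alpha^2)\equiv 1\pmod 4$, while separating real and imaginary parts in the parametric expression for $\alpha^2$ (using $(1+i)^2=2i$) forces $R(\alpha^2)\equiv 0$ or $2\pmod 4$; the symmetric computation with $\gamma^2$ disposes of the remaining bad value. For the surviving parity I would multiply the two parametric identities and, after absorbing units, obtain
\begin{equation*}
(\alpha\gamma)^2=P^4-(1+i)^2Q^4=P^4-\bigl((1+i)Q^2\bigr)^2,
\end{equation*}
so that $P^4=(\alpha\gamma)^2+\bigl((1+i)Q^2\bigr)^2$ (or the companion relation with the even leg playing the role of hypotenuse) exhibits a square as a sum of two squares whose even leg $(1+i)Q^2$ carries the full power of $1+i$. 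Feeding this Pythagorean relation into the resolvent of \cite{Sidokhine:2016aa} for $X^2+Y^2=Z^2$ produces a strictly smaller solution in which the relevant product is a proper divisor of $\beta$, hence lies in a coset $A_k$ with $k<n+1$; the Principle of Infinite Descent for Cosets (Theorem 2.8) then gives the contradiction and shows there is no non-trivial solution.

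The step I expect to be the main obstacle is the $(1+i)$-bookkeeping inside the resolvent. Because $\alpha^2\equiv\gamma^2\equiv 1\pmod 4$, the ramified prime $1+i$ is shared very unevenly between the factors $\gamma^2-\alpha^2$ and $\gamma^2+\alpha^2$, and I must determine exactly which factor absorbs it, and with what multiplicity, before either the parametrisation or the mod-$4$ eliminations are justified. A secondary difficulty is checking that the descent from the surviving case truly returns to an equation of the same class, so that Theorem 2.8 applies, and that no exceptional base solution persists --- in contrast with the finitely many solutions of Theorem 3.3 --- which is precisely what upgrades the conclusion to ``no non-trivial solutions''.
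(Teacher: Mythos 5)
Your setup (normalisation, $\nu(\beta)\geq 3$, least coset $A_{n+1}$, the resolvent parametrisation of $\alpha^2,\gamma^2,\beta$ with $\beta=(1+i)^2PQ$, and the elimination of three of the four values of $t$ by real-part congruences) matches the paper's proof, which discards $t=0,2,3$ by exactly such sign/parity arguments on $R(\cdot)$. The genuine gap is in your treatment of the surviving case $t=1$. You multiply the two resolvent identities to get $(\alpha\gamma)^2+\bigl((1+i)Q^2\bigr)^2=(P^2)^2$ and propose to descend through the resolvent of $X^2+Y^2=Z^2$. But that equation is abundantly solvable over $\mathbb{Z}[i]$, so the Pythagorean relation alone carries no contradiction: by passing to the product you have thrown away the binding constraint, namely that $P^2+(1+i)Q^2$ and $P^2-(1+i)Q^2$ are \emph{simultaneously} squares. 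Concretely, the Pythagorean parametrisation gives $Q^2=(1+i)mn$ and $P^2=i^{t+1}(m^2+(-1)^tn^2)$ with $\gcd(m,n)=1$; resolving $Q^2=(1+i)mn$ forces $n=i^bs^2$, $m=i^{-b}(1+i)r^2$, and substituting yields $P^2$ equal to a unit times $2ir^4\pm s^4$ --- an equation of a \emph{different} class from $X^4+(1+i)Y^2=Z^4$. So the descent does not return to the original class statement, Theorem 2.8 does not apply as you invoke it, and this is precisely the obstacle you flagged but did not resolve.

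The paper avoids this by never multiplying the two identities. In the case $t=1$ it keeps the simultaneous system $P^2+(1+i)Q^2=\alpha^2$, $P^2-(1+i)Q^2=\gamma^2$, parametrises each equation separately with parameters $(n,m,t)$ and $(n',m',s)$, proves $t-s\equiv 0 \pmod 2$ by the same real-part parity trick you used earlier, and reduces to the system $n^2+(1+i)m^2=n'^2-(1+i)m'^2$, $nm=n'm'$. The descent is then run on \emph{this system}: the Mordell-style gcd factorisation $n=(n,n')(n,m')$, $m=(m,n')(m,m')$, etc., together with the exclusion of the unit values $k=-1,\pm i$, regenerates a system of exactly the same shape, $P'^2\pm(1+i)Q'^2$ both squares, with $\nu(uv)<\nu(nm)$, which is what closes the infinite-descent loop within one class statement. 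If you want to repair your proposal, the fix is to replace the product step by this simultaneous (Mordell lemma) descent; your remarks about the $(1+i)$-bookkeeping are legitimate but secondary --- the distribution of $1+i$ is pinned down in the paper by the hypotheses $m\equiv m'\equiv 0 \pmod{1+i}$ and $n,n'\in O^I$, not by any delicate case analysis.
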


\begin{proof}
Let $(\alpha, \beta, \gamma)$ be a solution of the equation then $\nu(\beta) \geq 3$. Thus the equation has no solution if to $\beta \in A_0, A_1$ or $A_2$.  Let $A_{n+1}$ be the first when $(\alpha, \beta, \gamma)$ is a solution the equation, where $\beta \in A_{n+1}$. Let $\alpha, \gamma$ belong to $O^I$, and $\gamma=i^{2+m} (1+ i)p_2^{\alpha_2}...p_n^{\alpha_n}$, where $p_i$ are distinct primes and belong to $O^I$. Then, according to \cite{Sidokhine:2016aa}, we have
\begin{equation*}
\alpha^2=i^{1-t} (P^2-(-1)^t (1+i) Q^2), \beta = (1+i)^2 PQ, \gamma^2=i^{1-t} (P^2+(-1)^t (1+i) Q^2).
\end{equation*}
Let $t = 0$ then
\begin{equation*} 
\alpha^2=i(P^2-(1+i) Q^2), \gamma^2=i(P^2+(1+i) Q^2) \text{ and } (\alpha\gamma)^2  = -P^4+(1+ i)^2 Q^4
\end{equation*}

Since $\alpha, \gamma, P$ belong to $O^I$ so $(\alpha\gamma)^2, -P^4$ must have the same sign. That is a contradiction. 

Let $t = 2$ like $t = 0$, $(\alpha\gamma)^2, -P^4$ must have the same sign. That is a contradiction.

Let $t = 3$ then $\alpha^2= -(P^2+(1+ i) Q^2), \gamma^2= -(P^2-(1+ i) Q^2)$.

Since $\alpha, P$ belong to $O^I$ so $\alpha^2, -P^2$ must have the same sign. That is a contradiction. 

Let $t =1$ then we have the following system of equations
\begin{equation*}
\begin{cases}
P^2+(1+i) Q^2=\alpha^2 \\ 
P^2-(1+i) Q^2=\gamma^2
\end{cases}
\end{equation*}
According to \cite{Sidokhine:2016aa}, the individual solutions of each of the equations have the following forms
\begin{eqnarray*}
P=i^{1-t} (n^2-(-1)^t (1+i) m^2),   Q = (1+ i)^2 nm,   \alpha = i^{1-t} (n^2+(-1)^t (1+i) m^2) \\
P=i^{1-s} (n'^2+(-1)^s (1+i) m'^2),   Q = (1+ i)^2 n'm',   \gamma = i^{1-s} (n'^2-(-1)^s (1+i) m'^2)
\end{eqnarray*}

where $\gcd(n, m) = \gcd(n', m') = 1$ and $n, n' \in O^I$. Show that $t - s\equiv 0 \mod 2$. Let $t - s \nequiv 0 \mod 2$ then $s - t = 2k +1$ and
\begin{equation*} 
i^{1-s+2k+1} (n^2-(-1)^{s+1}(1+i) m^2)=i^{1-s} (n'^2+(-1)^s (1+i)m'^2)
\end{equation*}
or 			
\begin{equation*}
(-1)^k i(n^2+(-1)^{s}) (1+i) m^2)=(n'^2+(-1)^s (1+i)m'^2)
\end{equation*}
But $(-1)^k R(i(n^2+(-1)^s (1+i) m^2))=(-1)^k I(n^2)+(-1)^{s+k} (I(m^2 )-R(m^2))\equiv 0 \mod 2$ then as $R(n'^2+(-1)^s (1+i)m'^2)=R(n'^2)+(-1)^s (R(m^2)-I(m^2 ))\equiv 1 \mod 2$. That is a contradiction. 
Thus  we have $(-1)^k (n^2-(-1)^s (1+ i) m^2)=n'^2+(-1)^s (1+ i)m'^2$. Given result leads to the following system of   the equations 
\begin{equation*}
\begin{cases}
n^2+(1+ i) m^2=n'^2-(1+ i)m'^2 \\
nm = n'm'
\end{cases}
\end{equation*}          

where $n, n'\in O^I$ since $m\equiv m'\equiv 0 \mod (1+i)$ and $\gcd(n, m) = \gcd(n', m') = 1$.

Let $(n, m, n', m')$ be a solution of the system of equations then $\nu(nm) \geq 3$. The system of equations has no solutions if the product $nm$ belongs to $A_0, A_1, A_2$.  Let $A_{n+1}$ be the first when $(n, m, n', m')$ is any solution the equation where $nm \in A_{n+1}$. Further we use the notation $\gcd(n, m) = (n, m)$. The following factorization is possible
\begin{eqnarray*}
n  = (n, n')(n, m'),	 m =  (m, n')(m, m') \\
n' = (n', n)(n', m),	 m' =  (m', n)(m', m)
\end{eqnarray*}
Let us make substitution
\begin{equation*}
(n, n')^2 (n, m')^2+(1+i) (m, n')^2 (m, m')^2=(n', n)^2 (n', m)^2-(1+ i) (m', n)^2 (m', m)^2
\end{equation*}
then we have
\begin{equation*}
(n, m')^2 ((n, n')^2+(1+i) (m', m)^2)=(m, n')^2 ((n', n)^2-(1+i) (m, m')^2)
\end{equation*} 
Since  $\gcd((n, m'), (m, n')) = \gcd((n, n')^2+(1+i) (m', m)^2, (n', n)^2-(1+i) (m, m')^2) = 1$. Thus
\begin{equation*} 
k = ((n', n)^2-(1+i) (m, m')^2)/(n, m')^2=((n, n')^2+(1+i) (m', m)^2)/(m, n')^2
\end{equation*}
where $k = \pm1$ or $\pm i$. However, the value $k$ cannot be equal to $(-1)$ or $\pm i$. Since $(n, n'), (n, m')$ and $(n', m)$ belong to $O^I$  we have for case 1
\begin{equation*}
(n', n)^2-(1+ i) (m, m')^2= -(n, m')^2
\end{equation*}
but $(n', n)^2, -(n, m')^2$ must have the same sign. That is a contradiction. For case 2 we have
\begin{equation*}
(n', n)^2+(1+ i) (m, m')^2= \pm i(n', m)^2
\end{equation*}
According to \cite{Sidokhine:2016aa}, $(m', m)$ has to belong to $O^I$  but $(m', m)\equiv 0 \mod(1+ i)$. That is a contradiction. 
Thus we have gotten the following system of the equalities
\begin{equation*}
\begin{cases}
P'^2+(1+i) Q'^2=\alpha'^2 \\
P'^2-(1+i) Q'^2=\gamma'^2
\end{cases}
\end{equation*}  
where $P'= (n', n), Q' = (m', m), \alpha' = (n, m'), \gamma'= (m, n')$ and $Q'$ is a divisor of the product nm.
The expressions for $P', Q'$ of the first equality have the following form 
\begin{equation*}
P'=u^2-(1+i) v^2, Q'=(1+ i)^2 uv,\gcd(u, v) = 1 
\end{equation*}
The expressions for $P', Q'$ of the second equality have the following form 
\begin{equation*}
P'=u'^2+(1+i) v'^2, Q'=(1+ i)^2 u'v', \gcd(u', v') = 1
\end{equation*}
Thus we have the following system of the equalities
\begin{equation*}
u^2+(1+i) v^2=u'^2-(1+i) v'^2,uv = u'v' \\
\gcd(u, v) = \gcd(u', v') = 1
\end{equation*}
where $(u, v, u', v')$ is a solution of the system of equations. Since the product $uv$ is a proper divisor $Q'$ so $\nu(uv) < \nu(nm)$. Thus the product $uv$ belongs to $A_k<A_{n+1}$. That is a contradiction. Thus the equation $X^4+(1+i) Y^2=Z^4$ has no non-trivial solutions over $\mathbb{Z}[i]$.
\end{proof}

\begin{theorem}
The equation $X^4+Y^4=(1+i)Z^2$, where $\gcd(X, Y, Z)\in U$ and $XYZ \neq 0$, has no non-trivial solutions over Gaussian integers.
\end{theorem}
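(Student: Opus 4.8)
The plan is to avoid infinite descent entirely and settle this equation by a single $(1+i)$-adic valuation count; within the resolvent framework of \cite{Sidokhine:2016aa} this is exactly the statement that \emph{every} residue case of the parameter $t$ collapses to a contradiction, so that, in contrast to the companion Theorem~3.4, no descent step survives.

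First I would record the elementary congruence that for every odd Gaussian integer $g$ (that is, $g \in O^I$ with $(1+i)\nmid g$) one has $g^4 \equiv 1 \ \mathrm{mod}\ (1+i)^6$. Writing $g = a+bi$ with exactly one of $a,b$ even, the square $g^2 = (a^2-b^2) + 2ab\,i$ has odd real part and imaginary part divisible by $4$; squaring once more gives a real part $\equiv 1 \ \mathrm{mod}\ 8$ and an imaginary part $\equiv 0 \ \mathrm{mod}\ 8$, and since $8 = i(1+i)^6$ this is precisely $g^4 \equiv 1 \ \mathrm{mod}\ (1+i)^6$. In particular $\nu_{1+i}(g^4-1) \geq 6$.

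Next I would evaluate $\nu_{1+i}$ of the left-hand side on a hypothetical solution $(\alpha,\beta,\gamma)$ with $\gcd(\alpha,\beta,\gamma)\in U$ and $\alpha\beta\gamma \neq 0$. The bases $\alpha,\beta$ cannot both be even, for otherwise $(1+i)^4 \mid \alpha^4+\beta^4 = (1+i)\gamma^2$ would force $(1+i)\mid\gamma$ and hence $(1+i)\mid\gcd(\alpha,\beta,\gamma)$. If exactly one of them is even, then $\nu_{1+i}(\alpha^4+\beta^4)=0$, since one summand has valuation $0$ and the other valuation at least $4$. If both are odd, the congruence above yields $\alpha^4+\beta^4 \equiv 2 \ \mathrm{mod}\ (1+i)^6$, and since $\nu_{1+i}(2)=2<6$ we get $\nu_{1+i}(\alpha^4+\beta^4)=2$. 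Thus in every admissible case $\nu_{1+i}(\alpha^4+\beta^4)$ is \emph{even}, whereas $\nu_{1+i}\big((1+i)\gamma^2\big) = 1 + 2\,\nu_{1+i}(\gamma)$ is \emph{odd}. These cannot agree, so no non-trivial solution exists; here the monoid $R/\ker(\nu)$ and the Principle of Infinite Descent for Cosets are not invoked at all.

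To keep the exposition parallel to the earlier theorems I would finally re-derive the same obstruction through the resolvent: from the factorization $X^4+Y^4 = (X^2+iY^2)(X^2-iY^2)$ one applies the parametrization of \cite{Sidokhine:2016aa} for a form $U^2+V^2=(1+i)W^2$ and examines the four cases $t=0,1,2,3$. Each should produce either a ``same sign'' contradiction between a square of an element of $O^I$ and $-P^4$, or a real-part parity contradiction modulo $4$, of exactly the type already used in Theorems~3.1 and~3.4. The only point demanding care — and the sole place where an error could hide — is the verification that, unlike in Theorem~3.4, \emph{no} value of $t$ leads to a genuine smaller solution. The valuation computation above is the conceptual reason this must fail, so I would present it first and treat the four-case analysis as a routine confirmation rather than the crux.
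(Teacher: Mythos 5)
Your proof is correct, and it takes a genuinely different route from the paper's. The paper settles this theorem in a single sentence, by appeal to the resolvent: it asserts that the properties of solutions of $X^2+(1+i)Y^2+Z^2=0$, taken from \cite{Sidokhine:2016aa}, are incompatible with solvability of the quartic, and no actual argument is displayed. You instead give a self-contained local obstruction at the prime $1+i$. Your congruence $g^4\equiv 1 \pmod{(1+i)^6}$ for odd $g$ is verified correctly ($g^2$ has odd real part and imaginary part divisible by $4$, hence $g^4\equiv 1 \pmod 8$, and $8=i(1+i)^6$); the case analysis is exhaustive (both $\alpha,\beta$ even is excluded by $\gcd(\alpha,\beta,\gamma)\in U$; mixed parity gives $\nu_{1+i}(\alpha^4+\beta^4)=0$; both odd gives $\alpha^4+\beta^4\equiv 2 \pmod{(1+i)^6}$ with $\nu_{1+i}(2)=2<6$, hence valuation exactly $2$); and the parity clash with $\nu_{1+i}\bigl((1+i)\gamma^2\bigr)=1+2\nu_{1+i}(\gamma)$, which is always odd, is decisive. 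The one point worth stating explicitly is that $\alpha^4+\beta^4=(1+i)\gamma^2\neq 0$ since $\gamma\neq 0$, so the valuation is finite and well defined. What your route buys: it proves the stronger fact that the equation is insolvable even in the $(1+i)$-adic completion, which is precisely the conceptual reason that, in contrast to Theorem 3.4 (where the right side $i\gamma^2$ has \emph{even} $(1+i)$-valuation and solutions $(i^s,i^t,\pm i(1+i))$ do exist), no descent case can survive here; and it is entirely independent of the parametrizations imported from \cite{Sidokhine:2016aa}, so it does not inherit their correctness. What the paper's route buys is only uniformity of method across the family $aX^4+bX^2Y^2+cY^4=dZ^2$. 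Your third paragraph, re-deriving the obstruction through a four-case resolvent analysis, is left as a sketch and is unnecessary given that the valuation argument is already complete; note in passing that the paper's resolvent for this equation is $X^2+(1+i)Y^2+Z^2=0$ rather than your suggested form $U^2+V^2=(1+i)W^2$, but nothing in your actual proof depends on this.
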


\begin{proof}
Since properties of solutions of the equation $X^2+(1+i)Y^2+Z^2=0$ are not compatible with solvability of the quartic equation so $X^4+Y^4=(1+i)Z^2$ has no solution over $\mathbb{Z}[i]$. 
\end{proof}

\subsection{The Equation $X^4+6X^2 Y^2+Y^4=Z^2$ (Mordell)}

\begin{theorem}
The equation $X^4+6X^2 Y^2+Y^4=Z^2$, where $\gcd(X, Y, Z)\in U$ and $XYZ \neq 0$, has only the finite number of non-trivial solutions over Gaussian integers.
\end{theorem}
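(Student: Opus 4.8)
The plan is to reduce the quartic to a \emph{concordant pair} of squares. From the identities
\[
X^4+6X^2Y^2+Y^4=(X^2+Y^2)^2+(2XY)^2,\qquad (X^2+Y^2)^2-(2XY)^2=(X^2-Y^2)^2,
\]
any solution $(\alpha,\beta,\gamma)$ satisfies simultaneously
\[
p^2+q^2=\gamma^2,\qquad p^2-q^2=(\alpha^2-\beta^2)^2,
\]
where $p=\alpha^2+\beta^2$ and $q=2\alpha\beta$. Following the normalisation of Theorems 3.1 and 3.3, I take $\alpha\in O^I$ and $\beta\equiv 0 \bmod (1+i)^2$ (the even variable), so that $p$ is odd while $q=(1+i)^2(-i\alpha\beta)$ is divisible by $(1+i)^2$ and $\gcd(p,q)$ is a unit. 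Thus both $p^2\pm q^2$ are squares with coprime legs — exactly the configuration met in the proof of $X^4+Y^4=iZ^2$, where the pair was $((P^2+Q^2)/2)^2\pm((P^2-Q^2)/2)^2$.

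Because the configuration is identical, I would transplant that proof. Applying the resolvent of $X^2+Y^2+Z^2=0$ from \cite{Sidokhine:2016aa} to \emph{each} member of the concordant pair yields two parametrisations of the same leg pair,
\[
p=i^{t+1}(m^2-(-1)^t n^2),\ q=(1+i)^2 mn,\qquad p=i^{s+1}(m'^2+(-1)^s n'^2),\ q=(1+i)^2 m'n'.
\]
Comparing real parts modulo $2$ via $R(\xi^2)\equiv 1 \bmod 4$ for $\xi\in O^I$ forces $t-s\equiv0\bmod2$ and collapses the two representations into the system
\[
n^2+m^2=n'^2-m'^2,\qquad nm=n'm',\qquad \gcd(n,m)=\gcd(n',m')=1,
\]
exactly as in the treatment of $X^4+Y^4=iZ^2$.

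From here the inner descent is verbatim. Writing $\gcd(n,m)=(n,m)$ and factoring $n,m,n',m'$ through the pairwise gcds $(n,n'),(n,m'),(m,n'),(m,m')$, substitution into $n^2+m^2=n'^2-m'^2$ isolates
\[
(n,m')^2\bigl((n,n')^2+(m,m')^2\bigr)=(m,n')^2\bigl((n,n')^2-(m,m')^2\bigr),
\]
so that a unit $k$ must equal $\pm1$ or $\pm i$; the $\bmod4$ congruence for $O^I$-squares rules out every value but $+1$, after which the new even factor $uv$ is a \emph{proper} divisor of $nm$, giving $\nu(uv)<\nu(nm)$ and a coset below $A_{n+1}$, against the Principle of Infinite Descent for Cosets. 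The descent can only stall at a degenerate leg: $q=2\alpha\beta=0$ is barred by $XYZ\neq0$; $\alpha^2-\beta^2=0$ forces $\alpha=\pm\beta$ and $\gamma^2=8\alpha^4$, which has no Gaussian root; and $p=\alpha^2+\beta^2=0$ forces $\beta=\pm i\alpha$, producing — since $\gcd(\alpha,\beta,\gamma)$ is a unit — the finite family $(i^{s},\pm i^{\,s+1},\pm2i^{\,2s+1})$, $0\le s\le3$, e.g. $(1,i,2i)$. This degenerate leg is the exact analogue of $P^2-Q^2=0$ for $X^4+Y^4=iZ^2$ and is what renders the count \emph{finite} rather than empty.

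The main obstacle is the interface between the generic descent and this boundary. The parametrisation is licit only under the normalisation ``$\beta$ even'', whereas the finite solutions are the \emph{both-odd} triples with $\alpha^2+\beta^2=0$; I must therefore show by a separate $\bmod4$ argument on $O^I$-squares that no other both-odd triple survives, so the exceptional locus is exactly $\beta=\pm i\alpha$ and contributes only the listed family. Equally delicate is certifying in the inner step that $uv$ is a genuine proper divisor of $nm$ and not an associate — only a strict drop in $\nu$ closes the descent — which demands that the sign and parity casework on $t$ and $s$ be carried through exactly as for $X^4+Y^4=iZ^2$, with no branch left over.
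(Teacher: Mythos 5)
Your even branch is sound and is essentially the paper's own argument: the paper's Lemma 3.7 reaches the identical second-degree system $U^2+V^2=U'^2-V'^2$, $UV=U'V'$, $\gcd(U,V)=\gcd(U',V')=1$ via the resolvent quadratic $z^2-(u_0^2+v_0^2)z-(u_0v_0)^2=0$, whose integral roots factor as $i^sn^2$ and $i^{-s}m^2$, whereas you reach the same system by parametrizing the concordant pair $p^2\pm q^2$ twice and aligning the unit powers $t,s$ modulo $2$ --- which is precisely the route of the paper's Theorem 3.3 proof, transplanted as you say; both versions are then closed by the same gcd-factorization and $\nu$-coset descent. No objection there.

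The genuine gap is the both-odd branch, which is exactly where every nontrivial solution lives and hence where the word \emph{finite} in the statement is decided. Your normalization $\alpha\in O^I$, $\beta\equiv 0 \bmod (1+i)^2$ is not a harmless reduction for this equation: unlike $X^4+Y^4=Z^2$, nothing forces $XY\equiv 0 \bmod(1+i)$ here, and when both variables are odd one has $q=2\alpha\beta=-i(1+i)^2\alpha\beta$ and $p=\alpha^2+\beta^2\equiv 0 \bmod (1+i)^2$, so $\gcd(p,q)$ is not a unit and the coprime-leg parametrization you transplant simply does not apply. You flag this interface and promise ``a separate $\bmod\,4$ argument on $O^I$-squares,'' but no congruence argument can close this branch in principle: the surviving solutions $(i^s,\pm i^{s+1},\pm(1+i)^2)$ themselves lie in it, so every residue condition modulo $4$ (or any modulus) that a hypothetical non-degenerate both-odd triple must satisfy is actually realized by genuine solutions; congruences cannot show that $\alpha^2+\beta^2=0$ is \emph{forced}, only structure theory or descent can. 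The paper closes this branch by a different move (Lemma 3.8): rewrite $d_0^2=(u_0^2+v_0^2)^2-((1+i)^2u_0v_0)^2$ and invoke the solvability analysis of that difference-of-squares equation from \cite{Sidokhine:2016aa}, which rules out $u_0^2+v_0^2\neq 0$ and leaves exactly the degenerate family. Until you supply an argument of this kind, your proof establishes only that the even branch is empty and that the degenerate loci behave as claimed; the finiteness assertion itself remains unproved. (Incidentally, your family with $Z=\pm 2i=\pm(1+i)^2$ is the correct one; the paper's printed unit $\pm i(1+i)^2$ squares to $4$ rather than the required $-4$, an apparent typo.)
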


Let $X, Y$ belong to $G$ - submonoid in other words $X = p_2^{\alpha_2}...p_n^{\alpha_n}, Y=(1+ i)^\beta p_2^{\beta_2}...p_m^{\beta_m}$ , where $p_i$ are distinct primes and belong to $O^I$ then we should consider two equations
\begin{equation*}
X^4\pm6X^2 Y^2+Y^4=Z^2 \text{ where } X, Y\in G
\end{equation*}
We study the case $X^4+6X^2 Y^2+Y^4=Z^2$ other case can be considered the same way. 

\begin{lemma}
The equation $X^4+6X^2 Y^2+Y^4=Z^2$, where $\gcd(X, Y, Z)=1$, $XY\equiv 0 \mod(1+i)$, $XYZ \neq 0$ and $X, Y\in G$, has a solution if only if the system of the equations
\begin{equation*}
\begin{cases}
U^2+V^2=U'^2-V'^2 \\
UV = U'V' \\
\gcd(U, V) = \gcd(U', V') =1
\end{cases}
\end{equation*} 
where $U, V, U', V'$ belong to $G$-set, has a solution.
\end{lemma}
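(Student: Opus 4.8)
The plan is to recast the quartic as a Pythagorean equation and then read the resolvent system off the parametrisation of sums of two squares over $\mathbb{Z}[i]$. The key identity is
\begin{equation*}
X^4+6X^2Y^2+Y^4=(X^2+Y^2)^2+(2XY)^2,
\end{equation*}
so that $X^4+6X^2Y^2+Y^4=Z^2$ becomes $(X^2+Y^2)^2+(2XY)^2=Z^2$. From $\gcd(X,Y)=1$ one gets $\gcd(X^2+Y^2,XY)=1$, and since $XY\equiv 0\pmod{1+i}$ exactly one of $X,Y$ is even, so $X^2+Y^2$ is odd while $2XY$ is even; hence $\gcd(X^2+Y^2,2XY)\in U$ and we have a primitive Pythagorean triple in which the odd and even legs are determined.

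For the forward implication I would apply the parametrisation of $X^2+Y^2+Z^2=0$ from \cite{Sidokhine:2016aa} to this triple, taking $X^2+Y^2$ as the odd leg and $2XY$ as the even leg:
\begin{equation*}
X^2+Y^2=i^{\,t+1}\bigl(m^2-(-1)^t n^2\bigr),\qquad 2XY=(1+i)^2 mn,\qquad \gcd(m,n)=1 .
\end{equation*}
A parity argument on the real parts, of exactly the type used for $X^4+Y^4=Z^2$ and $X^4+Y^4=iZ^2$ (comparing $R(\,\cdot\,)$ modulo $2$ and $4$ and using $X,Y\in O^I$), rules out all but one value of $t$ and fixes the unit. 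Absorbing the remaining unit into the second parameter (for instance replacing $n$ by $in$) yields $U'=m$, $V'=in$ with $X^2+Y^2=U'^2-V'^2$ and $XY=U'V'$; setting $U=X$, $V=Y$ then produces a solution of the system $U^2+V^2=U'^2-V'^2$, $UV=U'V'$, $\gcd(U,V)=\gcd(U',V')=1$ with all four entries in the $G$-set.

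The converse is purely algebraic. Given a solution $(U,V,U',V')$ of the system, set $X=U$, $Y=V$, $Z=U'^2+V'^2$; then, using $UV=U'V'$ and $U^2+V^2=U'^2-V'^2$,
\begin{equation*}
X^4+6X^2Y^2+Y^4=(U^2+V^2)^2+(2UV)^2=(U'^2-V'^2)^2+(2U'V')^2=(U'^2+V'^2)^2=Z^2,
\end{equation*}
so $(X,Y,Z)$ solves the quartic, and the coprimality and $XYZ\neq 0$ hypotheses carry over directly.

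The main obstacle is the unit and $(1+i)$ bookkeeping in the forward direction. Because the parametrisation produces the even leg as $(1+i)^2 mn=2i\,mn$ rather than $2mn$, the naive relation reads $XY=i\,mn$, and one must show that exactly one case $t$ survives and that the leftover unit can be absorbed into $U'$ and $V'$ so that simultaneously $UV=U'V'$ holds and the sign in $U'^2-V'^2$ comes out correctly (possibly after interchanging $U'$ and $V'$). Establishing this compatibility is precisely the delicate congruence analysis already carried out for $X^4+Y^4=iZ^2$, where the identical system $n^2+m^2=n'^2-m'^2$, $nm=n'm'$ was obtained; it is the only nontrivial step, everything else reducing to the algebraic identity above.
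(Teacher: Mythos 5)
Your converse direction coincides with the paper's: the identity $(X^2+Y^2)^2+(2XY)^2=X^4+6X^2Y^2+Y^4$ is exactly the discriminant computation behind the paper's resolvent quadratic $z^2-(u_0^2+v_0^2)z-(u_0v_0)^2=0$, whose roots are $u_0'^2$ and $-v_0'^2$. Your forward direction, however, takes a genuinely different route. The paper never parametrises the Pythagorean triple $(X^2+Y^2,\,2XY,\,Z)$: from a quartic solution $(u_0,v_0,d_0)$ it forms $u_0'=\frac{d_0+u_0^2+v_0^2}{2}$, $v_0'=\frac{d_0-u_0^2-v_0^2}{2}$, observes that these are coprime Gaussian integers with $u_0'v_0'=(u_0v_0)^2$ and $u_0'-v_0'=u_0^2+v_0^2$, and hence writes $u_0'=i^sn^2$, $v_0'=i^{-s}m^2$ with $nm=u_0v_0$ on the nose; the entire unit bookkeeping collapses to the single exponent $s$, restricted to $\{0,2\}$ by citation to \cite{Sidokhine:2016aa}. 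Your route buys a more transparent picture (the quartic literally is a Pythagorean equation) but pays with a four-fold case analysis over $t$ and a stray unit, neither of which arises in the paper's factorization argument.

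That is where the gap sits: the pivotal step of your forward direction --- ``a parity argument rules out all but one value of $t$ and fixes the unit'' --- is asserted, not carried out, and you yourself flag it as the main obstacle. It does in fact close: with the odd parameter normalized in $O^I$ one has $R(X^2+Y^2)\equiv 1 \bmod 4$, while $t$ even gives $R\bigl(\pm i(m^2-n^2)\bigr)=\mp\bigl(I(m^2)-I(n^2)\bigr)\equiv 0,2 \bmod 4$ and $t=1$ gives $R\bigl(-(m^2+n^2)\bigr)\equiv 3 \bmod 4$, leaving only $t=3$; but since this is the whole content of the implication, it must be written out, not delegated to an analogy with the $X^4+Y^4=iZ^2$ analysis (which compares two parametrisations of the same quantity, a related but different argument). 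More concretely, your absorption $U'=m$, $V'=in$ satisfies the two equations $U^2+V^2=U'^2-V'^2$ and $UV=U'V'$, but not obviously the lemma's side condition that $U',V'$ belong to the $G$-set: $G$-elements are unit-free products of primes with odd part in $O^I$, and $in$ is generally not of that shape. The relation $XY=i\,mn$ forced by $2XY=(1+i)^2mn$ leaves a unit $i$ whose compatibility with the normalization of $m,n$ in the cited parametrisation must be verified; the paper's construction avoids this entirely because it factors $(u_0v_0)^2=u_0'v_0'$ rather than $2XY$, so no stray unit ever appears.
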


\begin{proof}
Let $u_0, v_0, u_0', v_0'$ be a solution of the system of equations, where $u_0 v_0\equiv 0 \mod(1+i)$ and $u_0, v_0, u_0', v_0'$ belong to $G$ - submonoid, then the equation
\begin{equation*} 
z^2- (u_0^2+v_0^2)z -(u_0 v_0 )^2=0
\end{equation*}
has a solution over Gaussian integers and so the discriminate of this equation must be faithful square. In other words there exists $d_0$ such that 
\begin{equation*}
d_0^2= u_0^4  +6u_0^2 v_0^2+v_0^4.
\end{equation*}
Let $(u_0, v_0, d_0)$ be a solution of the equation $X^4+6X^2 Y^2+Y^4=Z^2$, where $\gcd(u_0, v_0)=1$, $u_0 v_0 d_0 \neq 0, u_0 v_0\equiv 0 \mod(1+ i)$ and $u_0 ,v_0, d_0 \in G$ then $u_0' =\frac{d_0+u_0^2+v_0^2}{2}, v_0' =\frac{d_0-u_0^2-v_0^2}{2}$, where $u_0', v_0'$ are Gaussian integers and $\gcd(u_0', v_0') \in U$. Since $u_0'v_0' = (u_0 v_0 )^2$, where $u_0, v_0 \in G$, we can write $u_0'=i^s n^2$, $v_0'=i^{-s} m^2$, where $n, m\in G$, $\gcd(n, m)=1$, and $nm =u_0 v_0$. According to \cite{Sidokhine:2016aa}, for $s$ there are only two possible values $0,2$ and we can write down the following equalities
\begin{equation*}
\begin{cases}
u_0^2+v_0^2  = n^2- m^2 \\
u_0 v_0  = nm \\
\gcd(u_0, v_0)=\gcd(n, m)=1
\end{cases}
\end{equation*}
Since the system of equations of second - degree has no solution so the quartic equation also has no non-trivial solutions. 
\end{proof}

\begin{lemma}
The equation $X^4+6X^2 Y^2+Y^4=Z^2$, where $\gcd(X, Y, Z)=1,  XYZ \neq0 , XY\nequiv 0 \mod(1+ i)$ and  $X, Y \in G$ has only the finite number of non-trivial solutions over Gaussian integers.
\end{lemma}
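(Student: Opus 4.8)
The plan is to mirror the resolvent reduction used for the even case in the preceding lemma and for $X^4+Y^4=iZ^2$, but to notice that in the present ``doubly odd'' situation that reduction collapses to a residue obstruction modulo $4$ rather than to an infinite descent. First I would complete the square, writing
\[
X^4+6X^2Y^2+Y^4=(X^2+Y^2)^2+(2XY)^2=Z^2,
\]
so that any solution produces a representation of $Z^2$ as a sum of two squares. Equivalently, in the language of the resolvent $z^2-(X^2+Y^2)z-(XY)^2=0$ of the preceding lemma, its two roots $u'=\tfrac{Z+X^2+Y^2}{2}$ and $v'=\tfrac{Z-X^2-Y^2}{2}$ are Gaussian integers satisfying $u'v'=(XY)^2$ and $u'-v'=X^2+Y^2$.

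Next I would pin down the structure modulo $4$ forced by $X,Y\in O^I$. Since $R(X^2)\equiv R(Y^2)\equiv 1\pmod 4$ and $I(X^2)\equiv I(Y^2)\equiv 0\pmod 4$, one gets $X^2+Y^2\equiv 2\pmod 4$ (real part $\equiv 2$, imaginary part $\equiv 0$) and $(XY)^2\equiv 1\pmod 4$. In particular $(XY)^2$ is odd, and coprimality of $X,Y$ forces $\gcd(u',v')\in U$; hence by unique factorization $u'=i^{s}n^2$ and $v'=i^{-s}m^2$ with $n,m\in O^I$, $\gcd(n,m)=1$, and $nm=XY$ up to a unit. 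The opposite exponents $s,-s$ are forced because $u'v'\equiv i^{s+t}\pmod 4$ must equal $(XY)^2\equiv 1\pmod 4$, so $i^{s+t}=1$.

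The heart of the argument is then to substitute this factorization into $u'-v'=X^2+Y^2$ and test each residue of $s$. Using $n^2\equiv m^2\equiv 1\pmod 4$, the quantity $i^{s}n^2-i^{-s}m^2$ is $\equiv 0\pmod 4$ when $s$ is even and $\equiv\pm 2i\pmod 4$ when $s$ is odd, whereas $X^2+Y^2\equiv 2\pmod 4$; no value of $s$ is compatible. The only way to avoid the contradiction is $u'=v'$, i.e. $X^2+Y^2=0$, which is exactly the degenerate branch (note $u'=0$ or $v'=0$ would force $XY=0$, excluded by hypothesis). On that branch $X^2=-Y^2$ gives $X=\pm iY$, and coprimality then forces $X,Y\in U$, so $Z^2=(2XY)^2=-4$ and $(X,Y,Z)$ ranges over a finite set of unit tuples with $Z=\pm 2i$. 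Collecting the two outcomes yields the claim.

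The step I expect to be the main obstacle is the bookkeeping in the middle: one must verify \emph{exactly} that $X^2+Y^2$ has real part $\equiv 2$ and imaginary part $\equiv 0$ modulo $4$, that the $O^I$ normalization can always be arranged so that $n^2\equiv m^2\equiv 1\pmod 4$, and that $(XY)^2\equiv 1\pmod 4$ forces $i^{s+t}=1$; the residue incompatibility that kills the generic case rests entirely on these facts. A secondary point is to confirm that the degenerate branch $X^2+Y^2=0$ is the \emph{only} exception and that it produces only finitely many tuples, so that the ``finite number of solutions'' conclusion is exactly the count of unit solutions found there.
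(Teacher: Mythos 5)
Your proposal is correct in substance, but it takes a genuinely different route from the paper at the decisive step. Both arguments start from the same algebraic identity --- your $(X^2+Y^2)^2+(2XY)^2$ is literally the paper's $(u_0^2+v_0^2)^2-((1+i)^2u_0v_0)^2$, since $(1+i)^4=-4$ --- but from there the paper does not touch the resolvent at all in this lemma: it reads the identity as a Pythagorean relation $d_0^2+((1+i)^2u_0v_0)^2=(u_0^2+v_0^2)^2$ and disposes of the case $u_0^2+v_0^2\neq 0$ by a direct appeal to the classification of $X^2+Y^2+Z^2=0$ in \cite{Sidokhine:2016aa}, the underlying obstruction being the same parity fact your computation uncovers. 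You instead import the resolvent machinery of the even-case Lemma 3.7, factor $u'v'=(XY)^2$ with $\gcd(u',v')\in U$, normalize $u'=i^sn^2$, $v'=i^{-s}m^2$, and derive a contradiction modulo $4$; the arithmetic there checks out ($u'v'=(Z^2-(X^2+Y^2)^2)/4=(XY)^2$, so $i^{s+t}=1$ is forced, and $i^sn^2-i^{-s}m^2\equiv 0$ or $\pm 2i \pmod 4$ while $X^2+Y^2\equiv 2\pmod 4$). Your version is self-contained and makes the obstruction explicit; the paper's is a two-line reduction resting on the cited companion results.

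One point deserves care, though it is a looseness you share with the paper rather than a fatal gap. Under the lemma's stated hypotheses ($X,Y\in G$, hence $X^2\equiv Y^2\equiv 1\pmod 4$), the congruence $X^2+Y^2\equiv 2\pmod 4$ holds unconditionally, so $X^2+Y^2=0$ is already excluded and your argument actually proves the normalized solution set is \emph{empty} --- there is no branch left ``to avoid the contradiction.'' The degenerate solutions (your $X=\pm iY$ with $X,Y$ units and $Z=\pm 2i$) have $Y\notin G$, and after renormalizing the unit they belong to the companion equation $X^4-6X^2Y^2+Y^4=Z^2$, which the paper dispatches with ``the other case can be considered the same way''; the paper commits the same conflation by listing $(\pm 1,\pm i,\cdot)$ inside this lemma even though $\pm i\notin G$. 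Since empty certainly implies finite, the stated conclusion survives on either reading. Note finally that your value $Z=\pm 2i=\pm(1+i)^2$ is the correct one: the paper's printed $\pm i(1+i)^2$ squares to $+4$ rather than the required $-4$, evidently a unit typo.
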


\begin{proof}
Let $(u_0, v_0, d_0), \gcd(u_0, v_0)=1, u_0 v_0 d_0\neq0, u_0 v_0\nequiv0 \mod(1+ i)$  and $u_0 ,v_0\in G$ be a solution of equation.
\begin{equation*}
d_0^2= u_0^4  +6u_0^2 v_0^2+v_0^4= (u_0^2+v_0^2 )^2-((1+i)^2 u_0 v_0 )^2 
\end{equation*}
Let $u_0^2+v_0^2  =0$ then given equation has solutions $(\pm1, \pm i, \pm i(1+ i)^2), (\pm i, \pm1, \pm i(1+ i)^2)$, where the choice of the signs is arbitrary, are all solutions of given equation. Let $u_0^2+v_0^2  \neq 0$ then, according to \cite{Sidokhine:2016aa}, this equation has no solution.
\end{proof}

\begin{proof}[Proof of theorem 3.6]
According lemmas 3.7, 3.8, the equation $X^4+6X^2 Y^2+Y^4=Z^2$, where $XYZ \neq 0$ and $\gcd(X, Y, Z)\in U$ and $X,Y \in G$ has only the finite number of non-trivial solutions over $\mathbb{Z}[i]$. 
\end{proof}

\subsection{The Equation $X^4+6(1+i) X^2 Y^2+2iY^4=Z^2$} 

\begin{theorem}
The equation $X^4+6(1+i) X^2 Y^2+2iY^4=Z^2$, where $XYZ \neq 0$, $Y \equiv 0 \mod (1+i)$ and $\gcd(X, Y, Z)\in U$, has no non-trivial solutions over Gaussian integers. 
\end{theorem}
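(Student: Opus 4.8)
The plan is to imitate the resolvent reduction of Lemma 3.7, twisted by the factor $(1+i)$, and then to reduce to a system already settled in the paper. Let $(\alpha,\beta,\gamma)$ be a putative nontrivial solution. The hypothesis $\beta\equiv 0\pmod{1+i}$ together with $\gcd(\alpha,\beta,\gamma)\in U$ forces $\gcd(\alpha,\beta)\in U$ and makes $\alpha,\gamma$ odd. The first step is to complete the square. Since $(1+i)^2=2i$, one has
\[
(\alpha^2+(1+i)\beta^2)^2=\alpha^4+2(1+i)\alpha^2\beta^2+2i\beta^4 ,
\]
whence
\[
\gamma^2=\alpha^4+6(1+i)\alpha^2\beta^2+2i\beta^4=(\alpha^2+(1+i)\beta^2)^2+(1+i)(2\alpha\beta)^2 .
\]
Thus $\bigl(\alpha^2+(1+i)\beta^2,\,2\alpha\beta,\,\gamma\bigr)$ solves $X^2+(1+i)Y^2=Z^2$, the very ternary form whose solution theory from \cite{Sidokhine:2016aa} underlies Theorems 3.4 and 3.5.

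Second, I would introduce the resolvent. Because $\alpha,\gamma$ are odd and $(1+i)\beta^2$ is even, the numbers
\[
u'=\frac{\gamma+\alpha^2+(1+i)\beta^2}{2},\qquad v'=\frac{\gamma-\alpha^2-(1+i)\beta^2}{2}
\]
are Gaussian integers with $u'+v'=\gamma$, $u'-v'=\alpha^2+(1+i)\beta^2$, and $u'v'=(1+i)(\alpha\beta)^2$; moreover $\gcd(u',v')\mid\gamma$ is a unit. Since $u'v'$ is $(1+i)$ times a perfect square and its two factors are coprime, the odd $(1+i)$-adic valuation must be carried entirely by the even factor, so I may write $u'=i^{s}(1+i)n^2$ and $v'=i^{-s}m^2$ with $nm=\alpha\beta$ (up to a unit) and $\gcd(n,m)=1$.

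Third, I would pin down the admissible exponent $s$ by a real-part-modulo-$4$ computation of the type used in Theorems 3.1--3.3 (here $R(\xi^2)\equiv 1\pmod 4$ for odd $\xi\in O^{I}$, while the $(1+i)$- and $i$-twisted terms carry the complementary residues). Substituting the surviving normalization into $u'-v'=\alpha^2+(1+i)\beta^2$ produces the second-degree system
\[
\alpha^2+(1+i)\beta^2=(1+i)n^2-m^2,\qquad \alpha\beta=nm,\qquad \gcd(n,m)=1 .
\]
Putting $U=\alpha$, $V=\beta$, $U'=im$, $V'=in$ rewrites this, up to associates, as $U^2+(1+i)V^2=U'^2-(1+i)V'^2$ together with $UV=U'V'$ and $\gcd(U,V)=\gcd(U',V')=1$; note that $\beta\equiv0\pmod{1+i}$ makes $V$ and $V'$ even and $U,U'$ odd, exactly the parity of the system in the proof of Theorem 3.4. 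That system is shown there to be unsolvable, via the coprime factorization $n=(n,n')(n,m')$, $\dots$, and a descent that terminates through the Principle of Infinite Descent (Theorem 2.9). This contradiction finishes the argument, and because $\beta\equiv0\pmod{1+i}$ keeps us in this no-solution branch we never meet a degenerate branch of the sort producing the finitely many exceptional solutions of Theorem 3.6.

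I expect the main obstacle to be the third step: fixing the unit exponent $s$ and the attendant signs, since the simultaneous presence of the prime $(1+i)$ and the units $i^{s}$ makes the real/imaginary-part bookkeeping more delicate than in the untwisted Mordell case, and since one must verify that the parity forced by $\beta\equiv0\pmod{1+i}$ is precisely the one matching the no-solution system of Theorem 3.4 rather than a solvable configuration.
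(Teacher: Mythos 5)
Your proposal is correct and takes essentially the same route as the paper: your resolvent step (the integers $u'=\frac{\gamma+\alpha^2+(1+i)\beta^2}{2}$, $v'=\frac{\gamma-\alpha^2-(1+i)\beta^2}{2}$ with $u'v'=(1+i)(\alpha\beta)^2$, the coprime factorization into a unit times a square and a unit times $(1+i)$ times a square, and the reduction to the system $U^2+(1+i)V^2=U'^2-(1+i)V'^2$, $UV=U'V'$ settled by the descent in the proof of Theorem 3.4) is exactly the content of Lemma 3.10 and its use in the paper's proof of Theorem 3.9. The step you flag as the main obstacle, pinning down the unit exponent $s$ to $\{0,2\}$, is handled in the paper only by citing \cite{Sidokhine:2016aa}, so your sketch is no less complete than the original on that point.
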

Let $X, Y$ belong to $G$-submonoid in other words $X=p_2^{\alpha_2}...p_n^{\alpha_n}, Y=(1+ i)^\beta p_2^{\beta_2}...p_m^{\beta_m}$, where $p_i$  are distinct primes and belong to $O^I$ so we should consider two equations
\begin{equation*}
X^4\pm6(1+i) X^2 Y^2+2iY^4=Z^2, where  X, Y \in G.
\end{equation*}
We study the case $X^4+6(1+i) X^2 Y^2+2iY^4=Z^2$  other case can be considered the same way.
\begin{lemma}
The equation $X^4+6(1+i) X^2 Y^2+2iY^4=Z^2$  where $XYZ \neq 0, Y\equiv 0 \mod(1+i), \gcd(X, Y, Z)=1$ and $X, Y\in G$, has a solution if only if the system of equations 
\begin{equation*}
\begin{cases}
U^2+(1+i)V^2=U'^2-(1+i)V'^2 \\
UV = U'V' \\
\gcd(U, V) = \gcd(U', V') =1
\end{cases}
\end{equation*}
where $U, V, U', V'$ belong to $G$-submonoid and $V \equiv 0 \mod (1+i)$ has a solution.
\end{lemma}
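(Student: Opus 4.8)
The plan is to reproduce, for the twisted coefficients, the resolvent argument that established Lemma 3.7. The algebraic heart of the matter is that the quartic form is the discriminant of a quadratic in $z$: setting $B = X^2 + (1+i)Y^2$ and $C = (1+i)(XY)^2$ and using $(1+i)^2 = 2i$, one checks
\begin{equation*}
B^2 + 4C = X^4 + 6(1+i)X^2 Y^2 + 2iY^4 .
\end{equation*}
Hence the quartic $X^4 + 6(1+i)X^2Y^2 + 2iY^4 = Z^2$ is solvable exactly when the resolvent $z^2 - Bz - C = 0$ has a perfect-square discriminant in $\mathbb{Z}[i]$, and this equivalence is what I would translate into the quadratic system.

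For the direction from the system to the quartic, suppose $(U,V,U',V')$ solves the system with $V \equiv 0 \bmod (1+i)$. I would form $z^2 - (U^2+(1+i)V^2)z - (1+i)(UV)^2 = 0$ and exhibit its two roots directly: the first system equation gives $U'^2 - (1+i)V'^2 = U^2 + (1+i)V^2$ and the second gives $U'^2\cdot(1+i)V'^2 = (1+i)(U'V')^2 = (1+i)(UV)^2$, so the roots are $U'^2$ and $-(1+i)V'^2$, both Gaussian integers. Consequently the discriminant equals $(U'^2 + (1+i)V'^2)^2$, a perfect square, and $(U,\, V,\, U'^2+(1+i)V'^2)$ solves the quartic; nondegeneracy ($Z \neq 0$) follows from $\gcd(U',V')$ being a unit.

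For the converse, let $(u_0, v_0, d_0)$ solve the quartic with $u_0$ odd and $v_0 \equiv 0 \bmod(1+i)$, and put $u_0' = \tfrac{d_0 + u_0^2 + (1+i)v_0^2}{2}$, $v_0' = \tfrac{d_0 - u_0^2 - (1+i)v_0^2}{2}$. I would first verify that these are Gaussian integers and coprime up to a unit, and then record $u_0' v_0' = (1+i)(u_0 v_0)^2$ together with $u_0' - v_0' = u_0^2 + (1+i)v_0^2$. Because $u_0' - v_0'$ is odd, exactly one of $u_0', v_0'$ is even, and unique factorization in $G$ forces that even factor to absorb the entire odd power $(1+i)^{1+2\nu_{(1+i)}(u_0v_0)}$; writing $u_0' = i^s n^2$ and $v_0' = i^{-s}(1+i)m^2$ with $nm = u_0v_0$, $\gcd(n,m)=1$ and $m \equiv 0 \bmod(1+i)$, I would invoke \cite{Sidokhine:2016aa} (exactly as in Lemma 3.7) to restrict $s$ to $\{0,2\}$. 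Taking $U=u_0$, $V=v_0$, $U'=n$, $V'=m$ then produces the system, with $V \equiv 0 \bmod(1+i)$ as required.

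The main obstacle is the $(1+i)$-adic bookkeeping in the converse: proving integrality of $u_0', v_0'$ (equivalently $d_0 \equiv u_0^2 + (1+i)v_0^2 \bmod 2$), deciding which root carries the odd power of $(1+i)$, and above all excluding $s \in \{1,3\}$ — that is, ruling out a genuine factor of $i$ that cannot be folded into a square. This is precisely where the hypothesis $Y \equiv 0 \bmod(1+i)$ enters, since it makes $\nu_{(1+i)}\bigl((1+i)(u_0v_0)^2\bigr)$ odd and thereby forces the clean split; the congruence computations here refine those used for $X^4 + (1+i)Y^2 = Z^4$ in Theorem 3.4.
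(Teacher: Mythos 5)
Your proposal is correct and follows essentially the same route as the paper's own proof: the same resolvent quadratic $z^2-(u_0^2+(1+i)v_0^2)z-(1+i)(u_0v_0)^2=0$, the same half-sum/half-difference definitions of $u_0',v_0'$, the same factorization $u_0'=i^s n^2$, $v_0'=i^{-s}(1+i)m^2$, and the same appeal to \cite{Sidokhine:2016aa} to restrict $s$ to $\{0,2\}$. Your write-up is in fact slightly more explicit than the paper's (you exhibit the roots $U'^2$ and $-(1+i)V'^2$ and verify the discriminant identity, where the paper merely asserts solvability), but the argument is the same.
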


\begin{proof}
Let $u_0, v_0, u_0', v_0'$ be a solution of the system of equations and $u_0, v_0, u_0', v_0'$ belong to $G$-submonoid then the equation
\begin{equation*}
z^2- (u_0^2+(1+i) v_0^2 )z -(1+ i)(u_0 v_0 )^2=0
\end{equation*}
has the solutions over Gaussian integers and so the discriminate of this equation must be faithful square. Other words there exists Gaussian integer $d_0$ such that 
\begin{equation*}
d_0^2= u_0^4  +6(1+i)u_0^2 v_0^2+2iv_0^4
\end{equation*}
$(u_0, v_0, d_0), \gcd(u_0, v_0)=1, u_0 v_0 d_0 \neq 0,  v_0 \equiv 0 \mod(1+ i)$ and $u_0 ,v_0, d_0 \in G$. 
Let $(u_0, v_0, d_0)$ be a solution of $X^4+6(1+i) X^2 Y^2+2iY^4=Z^2$, where $u_0 ,v_0\in G$, $\gcd(u_0, v_0)=1,u_0 v_0 d_0 \neq 0$, then $u_0'=\frac{d_0+u_0^2+(1+ i)v_0^2}{2}, v_0'= \frac{d_0-u_0^2-(1+ i)v_0^2}{2}$,where $u_0', v_0'$ are Gaussian integers and $\gcd(u_0', v_0')\in U$. Since $u_0' v_0'= (1+i)(u_0 v_0 )^2$, where $u_0, v_0 \in G$, we can write $u_0'=i^s n^2, v_0'=i^{-s} (1+i) m^2$, where $n, m\in G$, $\gcd(n, m)=1$, and $nm =u_0 v_0$. According to \cite{Sidokhine:2016aa}, $s$ can only be $0$ or $2$ and we can write down the following equalities
\begin{equation*}
\begin{cases}
u_0^2+(1+ i)v_0^2  = n^2-(1+ i)m^2 \\
u_0 v_0  = nm \\
\gcd(u_0, v_0)=\gcd(n, m)=1
\end{cases}
\end{equation*}

Since the system of equations has no solution so the quartic equation also has no non-trivial solutions. 
\end{proof}

\begin{proof}[Proof of theorem 3.9]
According lemma 3.10 the equation $X^4+6(1+i) X^2 Y^2+2iY^4=Z^2$, where $XYZ \neq 0, Y \equiv 0 \mod (1+i), \gcd(X, Y, Z) \in U$ and $X,Y \in G$, has no non-trivial solutions over $\mathbb{Z}[i]$.
\end{proof}

\section{Discussion and Conclusion}

In our work we have represented the study on quartic equations $aX^4 + bX^2Y^2 + cY^4 = dZ^2$ over the ring of Gaussian integers by a method of the resolvents. By this method one has been gotten the proofs of the propositions both on new quartic equations: $X^4 + 6X^2Y^2 + Y^4 = Z^2$ (Mordell's equation over Gaussian integers), $X^4 + 6(1+i)X^2Y^2 + 2iY^2 = Z^2$, $X^4 \pm Y^4 = (1+i) Z^2$  and the new proofs of the known theorems on the Fermat's quartic equations: $X^4 + Y^4 = Z^2$ (Fermat - Hilbert), $X^4 \pm Y^4 = iZ^2$ (Szab{\'o} - Najman)\cite{Szabo:2004aa}.

For proving the propositions on the quartic equations over the ring $\mathbb{Z}[i]$ one had been built a special form of the fundamental theorem arithmetic, modified Mordell's lemma with account of the properties of Gaussian integers and developed the infinite descent hypothesis for the rings with unique factorization. It has been shown that the results obtained are consistent with known results on Fermat's quartic equations which were proved by evaluating of the norms of Gaussian integers \cite{Cross:1993aa} as well as by the elliptic curves \cite{Najman:2010aa}. The given approach permits from the start to understand why some phenomenon in these problems as example the existence of the finite number of non-trivial solutions for some quartic equations takes place. General problem of a solvability of Fermat's quartic equations over imaginary quadratic rings (Aigner's problem) is one of the topical tasks, \cite{Mordell:1969aa} and \cite{Lynch:2014aa}.

\bibliographystyle{IEEEtran}
\bibliography{references}

\end{document}